\title{On Combinatorial Formulas for Macdonald Polynomials}
\author{Cristian Lenart}
\address{Department of Mathematics and Statistics, State University of New York at Albany, Albany, NY 12222}
\email{lenart@albany.edu}
\keywords{Macdonald polynomials, alcove walks, Ram-Yip formula, Haglund-Haiman-Loehr formula.}
\subjclass[2000]{Primary 05E05. Secondary 33D52.}
\date{April 30, 2008}
\thanks{Cristian Lenart was partially supported by the National Science Foundation grant  DMS-0701044}
\DeclareMathOperator{\Des}{Des}
\DeclareMathOperator{\Diff}{Diff}
\DeclareMathOperator{\maj}{maj}
\DeclareMathOperator{\inv}{inv}
\DeclareMathOperator{\rt}{r}
\DeclareMathOperator{\rev}{rev}
\DeclareMathOperator{\arm}{arm}
\DeclareMathOperator{\leg}{leg}
\newlength{\cellsize}
\newcommand\tableau[1]{
\vcenter{
\let\\=\cr
\baselineskip=-16000pt
\lineskiplimit=16000pt
\lineskip=0pt
\halign{&\tableaucell{##}\cr#1\crcr}}}
\newcommand{\tableaucell}[1]{{%
\def \arg{#1}\def \void{}%
\ifx \void \arg
\vbox to \cellsize{\vfil \hrule width \cellsize height 0pt}%
\else
\unitlength=\cellsize
\begin{picture}(1,1)
\put(0,0){\makebox(1,1){$#1$}}
\put(0,0){\line(1,0){1}}
\put(0,1){\line(1,0){1}}
\put(0,0){\line(0,1){1}}
\put(1,0){\line(0,1){1}}
\end{picture}%
\fi}}
\numberwithin{equation}{section}
\theoremstyle{plain}
\newtheorem{theorem}{Theorem}[section]
\newtheorem{proposition}[theorem]{Proposition}
\newtheorem{definition}[theorem]{Definition}
\newtheorem{example}[theorem]{Example}
\theoremstyle{remark}
\newtheorem{remark}[theorem]{Remark}
\def\R{\mathbb{R}}
\def\Z{\mathbb{Z}}
\def\F{{\mathcal F}}
\def\Waff{W_{\mathrm{aff}}}
\def\h{\mathfrak{h}}
\def\hR{\mathfrak{h}^*_\mathbb{R}}
\newcommand{\casethree}[6]{\left\{ \begin{array}{ll} #1 &\mbox{if $#2$} \\[0.04in]#3 &\mbox{if $#4$} \\ [0.04in] #5 & \mbox{if $#6$}\,. \end{array} \right.}
\newcommand{\casetwo}[3]{\left\{ \begin{array}{ll} #1 &\mbox{if $#2$} \\ [0.04in] #3 &\mbox{otherwise}\,. \end{array} \right.}
\newcommand{\case}[3]{\left\{ \begin{tabular}{ll}  #1 & if #2 \\ [.07in] #3 & otherwise\,. \end{tabular} \right.}
\newcommand{\stacksum}[2]{\sum_{\begin{array}{c}\vspace{-5.4mm}\;\\ \vspace{-1mm}\scriptstyle{#1}\\ \scriptstyle{#2}\end{array}} }
\begin{document}
\bibliographystyle{plain}

\begin{abstract} 
A recent breakthrough in the theory of (type $A$) Macdonald polynomials is due to Haglund, Haiman and Loehr, who exhibited a combinatorial formula for these polynomials in terms of a pair of statistics on fillings of Young diagrams. Ram and Yip gave a formula for the Macdonald polynomials of arbitrary type in terms of so-called alcove walks; these originate in the work of Gaussent-Littelmann and of the author with Postnikov on discrete counterparts to the Littelmann path model. In this paper, we relate the above developments, by explaining how the Ram-Yip formula compresses to a new formula, which is similar to the Haglund-Haiman-Loehr one but contains considerably fewer terms. 
\end{abstract}

\maketitle


\section{Introduction}
\label{intro}

Macdonald \cite{macsft,macopa} defined a remarkable family of orthogonal polynomials depending on parameters $q,t$, which bear his name. These polynomials generalize the spherical functions for a $p$-adic group, the Jack polynomials, and the zonal polynomials. At $q=0$, the Macdonald polynomials specialize to the Hall-Littlewood polynomials, and thus they further specialize to the Weyl characters (upon setting $t=0$ as well). There has been considerable interest recently in the combinatorics of Macdonald polynomials. This stems in part from a combinatorial formula for the ones corresponding to type $A$, which is due to Haglund, Haiman, and Loehr \cite{hhlcfm}, and which is in terms of fillings of Young diagrams. This formula uses two statistics on the mentioned fillings, called inv and maj. The Haglund-Haiman-Loehr formula already found important applications, such as new proofs of the positivity theorem for Macdonald polynomials, which states that the two-parameter Kostka-Foulkes polynomials have nonnegative integer coefficients. One of the mentioned proofs, due to Grojnowski and Haiman \cite{gahaha}, is based on Hecke algebras, while the other, due to Assaf \cite{asasem} is purely combinatorial and leads to a positive formula for the two-parameter Kostka-Foulkes polynomials.

Schwer \cite{schghl} gave a formula for the Hall-Littlewood polynomials of arbitrary type (cf. also \cite{ramawh}). This formula is in terms of so-called alcove walks, which originate in the work of Gaussent-Littelmann \cite{gallsg} and of the author with Postnikov \cite{lapcmc,lapawg} on discrete counterparts to the Littelmann path model \cite{litlrr,litpro}. Schwer's formula was recently generalized by Ram and Yip to a similar formula for the Macdonald polynomials \cite{raycfm}. The generalization consists in the fact that the latter formula is in terms of alcove walks with both ``positive'' and ``negative'' foldings, whereas in the former only ``positive'' foldings appear.

In \cite{lenhlp}, we relate Schwer's formula to the Haglund-Haiman-Loehr formula. More precisely, we show that we can group the terms in the type $A$ instance of Schwer's formula into equivalence classes, such that the sum in each equivalence class is a term in the Haglund-Haiman-Loehr formula for $q=0$.

In this paper, we relate the Ram-Yip formula to the Haglund-Haiman-Loehr formula. In a similar way to \cite{lenhlp}, we show that we can group the terms in the type $A$ instance of the Ram-Yip formula into equivalence classes, such that the sum in each class is a term in a new formula, which is similar to the Haglund-Haiman-Loehr one but contains considerably fewer terms. An equivalence class consists of all the terms corresponding to alcove walks that produce the same filling of a Young diagram $\lambda$ (indexing the Macdonald polynomial) via a simple construction. In fact, in this paper we require that the partition $\lambda$ is a regular weight; the general case will be considered in a future publication. 

This work does not directly specialize to the one in \cite{lenhlp} because here we do not recover the Haglund-Haiman-Loehr formula, but one similar to it. The explanation is that the Ram-Yip formula, which we use as input, is not a direct generalization of Schwer's formula. The main difference consists in the choice of a $\lambda$-chain (or reduced alcove path) in the two formulas, cf. Section \ref{specschwer} compared to \cite{lenhlp}[Section 3.1].

\medskip

{\bf Acknowledement.} I am grateful to Jim Haglund for helpful discussions.

\section{Preliminaries}\label{prelim}

We recall some background information on finite root systems and affine Weyl groups.

\subsection{Root systems}\label{rootsyst}

Let $\mathfrak{g}$ be a complex semisimple Lie algebra, and $\h$ a Cartan subalgebra, whose rank is $r$.
Let $\Phi\subset \h^*$ be the 
corresponding irreducible {\it root system}, $\hR\subset \h^*$ the real span of the roots, and $\Phi^+\subset \Phi$ the set of positive roots. Let $\rho:=\frac{1}{2}(\sum_{\alpha\in\Phi^+}\alpha)$. 
Let $\alpha_1,\ldots,\alpha_r\in\Phi^+$ be the corresponding 
{\it simple roots}.
We denote by $\langle\,\cdot\,,\,\cdot\,\rangle$ the nondegenerate scalar product on $\hR$ induced by
the Killing form.  
Given a root $\alpha$, we consider the corresponding {\it coroot\/} $\alpha^\vee := 2\alpha/\langle\alpha,\alpha\rangle$ and reflection $s_\alpha$.  

Let $W$ be the corresponding  {\it Weyl group\/}, whose Coxeter generators are denoted, as usual, by $s_i:=s_{\alpha_i}$. The length function on $W$ is denoted by $\ell(\,\cdot\,)$. The {\em Bruhat graph} on $W$ is the directed graph with edges $u\rightarrow w$ where $w=u s_{\beta}$ for some $\beta\in\Phi^+$, and $\ell(w)>\ell(u)$; we usually label such an edge by $\beta$ and write $u\stackrel{\beta}\longrightarrow w$. The {\em reverse Bruhat graph} is obtained by reversing the directed edges above. The {\em Bruhat order} on $W$ is the transitive closure of the relation corresponding to the Bruhat graph.

The {\it weight lattice\/} $\Lambda$ is given by
\begin{equation}
\Lambda:=\{\lambda\in \hR \::\: \langle\lambda,\alpha^\vee\rangle\in\Z
\textrm{ for any } \alpha\in\Phi\}.
\label{eq:weight-lattice}
\end{equation}
The weight lattice $\Lambda$ is generated by the 
{\it fundamental weights\/}
$\omega_1,\ldots,\omega_r$, which form the dual basis to the 
basis of simple coroots, i.e., $\langle\omega_i,\alpha_j^\vee\rangle=\delta_{ij}$.
The set $\Lambda^+$ of {\it dominant weights\/} is given by
$$
\Lambda^+:=\{\lambda\in\Lambda \::\: \langle\lambda,\alpha^\vee\rangle\geq 0
\textrm{ for any } \alpha\in\Phi^+\}.
$$
Let $\Z[\Lambda]$ be the group algebra of the weight lattice $\Lambda$, which  has
a $\Z$-basis of formal exponents $\{x^\lambda \::\: \lambda\in\Lambda\}$ with
multiplication $x^\lambda\cdot x^\mu := x^{\lambda+\mu}$.

Given  $\alpha\in\Phi$ and $k\in\Z$, we denote by $s_{\alpha,k}$ the reflection in the affine hyperplane
\begin{equation}
H_{\alpha,k} := \{\lambda\in \hR \::\: \langle\lambda,\alpha^\vee\rangle=k\}.
\label{eqhyp}
\end{equation}
These reflections generate the {\it affine Weyl group\/} $\Waff$ for the {\em dual root system} 
$\Phi^\vee:=\{\alpha^\vee \::\: \alpha\in\Phi\}$. 
The hyperplanes $H_{\alpha,k}$ divide the real vector space $\hR$ into open
regions, called {\it alcoves.} 
The {\it fundamental alcove\/} $A_\circ$ is given by 
$$
A_\circ :=\{\lambda\in \hR \::\: 0<\langle\lambda,\alpha^\vee\rangle<1 \textrm{ for all }
\alpha\in\Phi^+\}.
$$

\subsection{Alcove walks}\label{alcovewalks}

We say that two alcoves $A$ and $B$ are {\it adjacent} 
if they are distinct and have a common wall.  
Given a pair of adjacent alcoves $A\ne B$ (i.e., having a common wall), we write 
$A\stackrel{\beta}\longrightarrow B$ if the common wall 
is of the form $H_{\beta,k}$ and the root $\beta\in\Phi$ points 
in the direction from $A$ to $B$.  

\begin{definition}
An {\em alcove path\/} is a sequence of alcoves
 such that any two consecutive ones are adjacent. 
We say that an alcove path $(A_0,A_1,\ldots,A_m)$ is {\it reduced\/} if $m$ is the minimal 
length of all alcove paths from $A_0$ to $A_m$.
\end{definition}

We need the following generalization of alcove paths.

\begin{definition}\label{defalcwalk} An {\em alcove walk} is a sequence 
$\Omega=(A_0,F_1,A_1, F_2, \ldots , F_m, A_m, F_{\infty})$ 
such that $A_0,\ldots,$ $A_m$ are alcoves; 
$F_i$ is a codimension one common face of the alcoves $A_{i-1}$ and $A_i$,
for $i=1,\ldots,m$; and 
$F_{\infty}$ is a vertex of the last alcove $A_m$. The weight $F_\infty$ is called the {\em weight} of the alcove walk, and is denoted by $\mu(\Omega)$. 
\end{definition}
 
The {\em folding operator} $\phi_i$ is the operator which acts on an alcove walk by leaving its initial segment from $A_0$ to $A_{i-1}$ intact and by reflecting the remaining tail in the affine hyperplane containing the face $F_i$. In other words, we define
$$\phi_i(\Omega):=(A_0, F_1, A_1, \ldots, A_{i-1}, F_i'=F_i,  A_{i}', F_{i+1}', A_{i+1}', \ldots,  A_m', F_{\infty}')\,;$$
here $A_j' := \rho_i(A_j)$ for $j\in\{i,\ldots,m\}$, $F_j':=\rho_i(F_j)$ for $j\in\{i,\ldots,m\}\cup\{\infty\}$, and $\rho_i$ is the affine reflection in the hyperplane containing $F_i$. Note that any two folding operators commute. An index $j$ such that $A_{j-1}=A_j$ is called a {\em folding position} of $\Omega$. Let $\mbox{fp}(\Omega):=\{ j_1<\ldots< j_s\}$ be the set of folding positions of $\Omega$. If this set is empty, $\Omega$ is called {\em unfolded}. Given this data, we define the operator ``unfold'', producing an unfolded alcove walk, by
\[\mbox{unfold}(\Omega)=\phi_{j_1}\ldots \phi_{j_s} (\Omega)\,.\]

\begin{definition} 
A folding position $j$ of the alcove walk $\Omega=(A_0,F_1,A_1, F_2, \ldots , F_m, A_m, F_{\infty})$ is called a {\em positive folding} if the alcove $A_{j-1}=A_j$ lies on the positive side of the affine hyperplane containing the face $F_j$. Otherwise, the folding position is called a {\em negative folding}.
\end{definition}

We now fix a dominant weight $\lambda$ and a reduced alcove path $\Pi:=(A_0,A_1,\ldots,A_m)$ from $A_\circ=A_0$ to the alcove $A_m$ of minimum length in the $W$-orbit of the translate $A_\circ + \lambda$ (under the bijection between alcoves and affine Weyl group elements). Assume that we have
\[A_0\stackrel{\beta_1}\longrightarrow A_1\stackrel{\beta_2}\longrightarrow \ldots
\stackrel{\beta_m}\longrightarrow A_{m}\,,\]
where $\Gamma:=(\beta_1,\ldots,\beta_m)$ is a sequence of positive roots. This sequence, which determines the alcove path, is called a {\em $\lambda$-chain} (of roots). 

\begin{remark} $\lambda$-chains were defined in \cite{lapcmc,lapawg} based on alcove paths from $A_\circ$ to $A_\circ-\lambda$. Two equivalent definitions of such $\lambda$-chains (in terms of reduced words in affine Weyl groups, and an interlacing condition) can be found in \cite{lapawg}[Definition 5.4] and \cite{lapcmc}[Definition 4.1 and Proposition 4.4]. Hence, the $\lambda$-chains considered in this paper  are obtained by reversing the ones in the mentioned papers and by removing a certain segment at the end. The reason for this removal is that the alcove paths here do not end at the alcove $A_\circ + \lambda$, but at the minimum length representative in its orbit.
\end{remark}

 We also let $r_i:=s_{\beta_i}$, and let $\widehat{r}_i$ be the affine reflection in the common wall of $A_{i-1}$ and $A_i$, for $i=1,\ldots,m$; in other words, $\widehat{r}_i:=s_{\beta_i,l_i}$, where $l_i:=|\{j\le i\::\: \beta_j = \beta_i\}|$ is the cardinality of the corresponding set. Given $J=\{j_1<\ldots<j_s\}\subseteq[m]:=\{1,\ldots,m\}$, we define the Weyl group element $\phi(J)$ and the weight $\mu(J)$ by
\begin{equation}\label{defphimu}\phi(J):={r}_{j_1}\ldots {r}_{j_s}\,,\;\;\;\;\;\mu(J):=\widehat{r}_{j_1}\ldots \widehat{r}_{j_s}(\lambda)\,.\end{equation}

 Given $w\in W$, we define the alcove path $w(\Pi):=(w(A_0),w(A_1),\ldots,w(A_m))$. Consider the set of alcove paths
\[{\mathcal P}(\Gamma):=\{w(\Pi)\::\:w\in W\}\,.\]
We identify any $w(\Pi)$ with the obvious unfolded alcove walk of weight $\mu(w(\Pi)):=w(\lambda)$. Let us now consider the set of alcove walks
\[{\mathcal F}(\Gamma):=\{\,\mbox{alcove walks $\Omega$}\::\:\mbox{unfold}(\Omega)\in{\mathcal P}(\Gamma)\}\,.\]
We can encode an alcove walk $\Omega$ in ${\mathcal F}(\Gamma)$ by the pair $(w,J)$ in $W\times 2^{[m]}$, where 
\[\mbox{fp}(\Omega)=J\;\;\;\;\mbox{and}\;\;\;\;\mbox{unfold}(\Omega)=w(\Pi)\,.\]
Clearly, we can recover $\Omega$ from $(w,J)$ with $J=\{j_1<\ldots<j_s\}$ by
\[\Omega=\phi_{j_1}\ldots \phi_{j_s} (w(\Pi))\,.\]
We call a pair $(w,J)$ a {\em folding pair}, and, for simplicity, we denote the set $W\times 2^{[m]}$ of such pairs by ${\mathcal F}(\Gamma)$ as well. Given a folding pair $(w,J)$, the corresponding positive and negative foldings (viewed as a partition of $J$) are denoted by $J^+$ and $J^-$.   

\begin{proposition}\label{admpairs} {\rm (1)} Consider a folding pair $(w,J)$ with $J=\{j_1<\ldots<j_s\}$. We have $j_i\in J^+$ if and only if 
\[wr_{j_1}\ldots r_{j_{i-1}}>wr_{j_1}\ldots r_{j_{i-1}}r_{j_{i}}\,.\]

{\rm (2)} If $\Omega\mapsto (w,J)$, then
\[\mu(\Omega)=w(\mu(J))\,.\] 
\end{proposition}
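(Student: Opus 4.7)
The plan is to track, under the successive action of the folding operators $\phi_{j_1},\ldots,\phi_{j_s}$ starting from the unfolded walk $w(\Pi)$, how each alcove and the terminal vertex are transformed. By induction on $l$, after all foldings are applied, the alcove at position $k$ with $j_l\le k<j_{l+1}$ (using the convention $j_0:=0$ and $j_{s+1}:=m+1$) is $w\widehat{r}_{j_1}\cdots\widehat{r}_{j_l}(A_k)$. The inductive step uses the fact that $\phi_{j_l}$ acts on the tail past position $j_l$ by the reflection in the current face's hyperplane, which is the $(w\widehat{r}_{j_1}\cdots\widehat{r}_{j_{l-1}})$-image of $H_{\beta_{j_l},l_{j_l}}$; this reflection is the conjugate of $\widehat{r}_{j_l}$ by $w\widehat{r}_{j_1}\cdots\widehat{r}_{j_{l-1}}$, and composition telescopes to $w\widehat{r}_{j_1}\cdots\widehat{r}_{j_l}$. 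Part (2) is then immediate: $F_\infty=\lambda$ in $\Pi$, so in $\Omega$ it becomes $w\widehat{r}_{j_1}\cdots\widehat{r}_{j_s}(\lambda)=w(\mu(J))$.

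For Part (1), set $\sigma_i:=w\widehat{r}_{j_1}\cdots\widehat{r}_{j_{i-1}}$, with linear part $\overline{\sigma}_i=wr_{j_1}\cdots r_{j_{i-1}}$. By the above, the folded alcove at position $j_i$ (equal to that at position $j_i-1$) is $\sigma_i(A_{j_i-1})$, and the folding face lies in $\sigma_i(H_{\beta_{j_i},l_{j_i}})$. In $\Pi$, the positive root $\beta_{j_i}$ points from $A_{j_i-1}$ to $A_{j_i}$, so $A_{j_i-1}$ lies on the negative side of $H_{\beta_{j_i},l_{j_i}}$ and $A_{j_i}$ on the positive side. Applying $\sigma_i$, the positive/negative labelling of the two image half-spaces is preserved or reversed according to whether $\overline{\sigma}_i(\beta_{j_i})$ is a positive or a negative root, since by convention the positive side of $H_{\alpha,k}$ is defined with $\alpha$ taken to be positive. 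Hence $\sigma_i(A_{j_i-1})$ lies on the positive side of $\sigma_i(H_{\beta_{j_i},l_{j_i}})$ precisely when $\overline{\sigma}_i(\beta_{j_i})\in -\Phi^+$, which by the standard length criterion is equivalent to $\ell(\overline{\sigma}_i r_{j_i})<\ell(\overline{\sigma}_i)$; this is the Bruhat inequality in Part (1).

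The main obstacle is pinning down the sign convention for the ``positive side'' of a transformed affine hyperplane, and recognizing that this sign is governed solely by the linear part $\overline{\sigma}_i$: the translation part of $\sigma_i$ merely shifts the constant defining the hyperplane but does not affect which image half-space plays the role of the positive one. Once this is isolated, Part (1) reduces to the standard fact that $us_\alpha<u$ in Bruhat order is equivalent to $u(\alpha)\in-\Phi^+$ for a positive root $\alpha$, and Part (2) reduces to the tracking of the vertex $\lambda$ through the composed affine transformation $w\widehat{r}_{j_1}\cdots\widehat{r}_{j_s}$.
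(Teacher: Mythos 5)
Your argument is correct and follows essentially the same route as the paper's (much terser) proof: part (1) rests on the same criterion $\ell(u s_\alpha)<\ell(u)$ if and only if $u(\alpha)\in-\Phi^+$, and part (2) on the fact that the $W$-action commutes with the folding operators. Your explicit tracking of the alcoves and of $F_\infty$ through the telescoping conjugated reflections, and your isolation of the sign convention for the positive side of a transformed hyperplane, merely fill in details the paper leaves implicit.
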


\begin{proof} The first part rests on the well-known fact that, given a positive root $\alpha$ and a Weyl group element $w$, we have $\ell(ws_\alpha)<\ell(w)$ if and only if $w(\alpha)$ is a negative root \cite[Proposition~5.7]{humrgc}. The second part follows from the simple fact that the action of $w$ on an alcove walk commutes with that of the folding operators. 
\end{proof}

We call the sequence
\[w,\,wr_{j_1},\,\ldots,\,wr_{j_1}\ldots r_{j_{s}}=\phi(J)\]
the Bruhat chain associated to $(w,J)$. 

We now restate the Ram-Yip formula \cite{raycfm} for the Macdonald polynomials $P_\lambda(X;q,t)$ in terms of folding pairs. Since in this paper we only consider weights $\lambda$ that are regular (and dominant), we make this assumption from now on. As stated above, the $\lambda$-chain $\Gamma$ is fixed. 

\begin{theorem}\cite{raycfm} \label{hlpthm} We have
\begin{align}\label{hlpform}&P_{\lambda}(X;q,t)=\\
&\sum_{(w,J)\in{\mathcal F}(\Gamma)}t^{\frac{1}{2}(\ell(w)-\ell(w\phi(J))-|J|)}\,(1-t)^{|J|}\left(\prod_{j\in J^+}\frac{1}{1-q^{l_j}t^{\langle\rho,\beta_j^\vee\rangle}}\right)\left(\prod_{j\in J^-}\frac{q^{l_j}t^{\langle\rho,\beta_j^\vee\rangle}}{1-q^{l_j}t^{\langle\rho,\beta_j^\vee\rangle}}\right)x^{w(\mu(J))}\,.\nonumber\end{align}
\end{theorem}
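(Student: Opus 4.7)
The statement is a notational repackaging of the Ram-Yip formula from \cite{raycfm}, so my plan is to identify the sum over alcove walks in that paper with the sum over folding pairs $(w,J) \in \mathcal{F}(\Gamma)$ here, and to match each factor of each term.

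First, I would recall the formulation in \cite{raycfm}, which expresses $P_\lambda(X;q,t)$ as a sum over alcove walks $\Omega$ whose underlying unfolded walk lies in $\mathcal{P}(\Gamma)$, with a $q,t$-weight depending on the labels of the hyperplanes at which foldings occur, split by sign, together with a monomial $x^{\mu(\Omega)}$ involving the end-of-walk weight. The set of walks indexing that sum is precisely $\mathcal{F}(\Gamma)$ as defined in Section \ref{alcovewalks}. The bijection $\Omega \leftrightarrow (w,J)$ described before the statement therefore already identifies the index sets on the two sides.

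Next I would check factor-by-factor that the weight of $\Omega$ in the original formula agrees with the displayed weight of $(w,J)$. For the monomial part, Proposition \ref{admpairs}(2) gives $\mu(\Omega)=w(\mu(J))$, matching $x^{w(\mu(J))}$. For the partition $J=J^+\sqcup J^-$, Proposition \ref{admpairs}(1) shows that the geometric notion of positive/negative folding at position $j_i$ coincides with whether the Bruhat chain $w,\,wr_{j_1},\,\ldots,\,w\phi(J)$ has a descent or an ascent at step $i$; this is exactly the combinatorial criterion used in \cite{raycfm}. The label of the hyperplane folded at position $j$ is $(\beta_j,l_j)$ by construction of the $\lambda$-chain, so the rational factors
\[
\frac{1}{1-q^{l_j}t^{\langle\rho,\beta_j^\vee\rangle}}\quad\textrm{and}\quad
\frac{q^{l_j}t^{\langle\rho,\beta_j^\vee\rangle}}{1-q^{l_j}t^{\langle\rho,\beta_j^\vee\rangle}}
\]
reproduce the Ram-Yip contributions at positive and negative foldings. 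The prefactor $(1-t)^{|J|}$ records one factor of $(1-t)$ per folding position, again as in \cite{raycfm}.

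The one ingredient requiring more than a direct match is the power of $t$. The key observation is that each step of the Bruhat chain is multiplication by a single reflection $r_{j_i}$ in a walk-theoretic context, so that $\ell(wr_{j_1}\ldots r_{j_i})-\ell(wr_{j_1}\ldots r_{j_{i-1}})=\pm 1$, with $-1$ precisely for $j_i\in J^+$ by Proposition \ref{admpairs}(1). Telescoping gives $\ell(w)-\ell(w\phi(J))=|J^+|-|J^-|$, hence
\[
\tfrac{1}{2}\bigl(\ell(w)-\ell(w\phi(J))-|J|\bigr)=-|J^-|,
\]
which matches the cumulative $t$-weight at negative foldings in the original formula. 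The mild obstacle here, and the only non-bookkeeping step, is justifying that each Bruhat-chain step changes length by exactly $\pm 1$; this follows from the alcove-walk interpretation of the $r_{j_i}$ as reflections across the walls encountered along $w(\Pi)$, since crossing a single wall changes length by $\pm 1$. With that verified, the Ram-Yip formula and the displayed expression agree term by term, proving the theorem.
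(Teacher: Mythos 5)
The paper gives no proof of Theorem \ref{hlpthm}: it is quoted from \cite{raycfm}, and the surrounding text only sets up the dictionary between alcove walks and folding pairs. So your plan of verifying the translation factor by factor is the only reasonable one, and most of it is fine (the identification of index sets, the monomial via Proposition \ref{admpairs}(2), the positive/negative classification via Proposition \ref{admpairs}(1), and the rational factors attached to the hyperplane labels $(\beta_j,l_j)$). However, your treatment of the power of $t$ contains a genuine error. You assert that each step of the Bruhat chain satisfies $\ell(wr_{j_1}\cdots r_{j_i})-\ell(wr_{j_1}\cdots r_{j_{i-1}})=\pm 1$ and deduce $\tfrac{1}{2}(\ell(w)-\ell(w\phi(J))-|J|)=-|J^-|$. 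This is false: the $r_{j_i}=s_{\beta_{j_i}}$ are reflections in arbitrary positive roots, not simple ones, and right multiplication by $s_\beta$ changes the length of a Weyl group element by an odd integer that can be much larger than $1$ in absolute value. Since ${\mathcal F}(\Gamma)$ is all of $W\times 2^{[m]}$, with no admissibility or saturation condition on the associated Bruhat chain, such steps really occur. Concretely, in the setting of Example \ref{ex21} take $w$ to be the identity and $J=\{1\}$, so that $\beta_{j_1}=(1,4)$; then $w\phi(J)=4231$ has length $5$, the folding is negative, and the exponent is $\tfrac{1}{2}(0-5-1)=-3$, whereas $-|J^-|=-1$.

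The source of the slip is that you transported the statement ``crossing one wall changes length by $\pm1$'' from the affine Weyl group, where it applies to the affine length of the successive alcoves of the walk, to the finite Weyl group elements $wr_{j_1}\cdots r_{j_i}$ recording the successive directions of the folded walk; the latter jump by reflections in non-simple roots. In the Ram--Yip formula the $t$-exponent genuinely involves the lengths of the initial and final directions of the walk (here $\ell(w)$ and $\ell(w\phi(J))$) and cannot be reduced to a count of negative foldings. To repair the argument you should match the factor $t^{\frac{1}{2}(\ell(w)-\ell(w\phi(J))-|J|)}(1-t)^{|J|}$ directly against the corresponding normalization in \cite{raycfm} --- where a $t^{\frac{1}{2}(\ell(\cdot)-\ell(\cdot))}$ factor built from the two end directions appears together with one suitably normalized factor of $(1-t)$ per fold --- rather than attempting to telescope the lengths.
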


\subsection{A new formula of Haglund-Haiman-Loehr type} 

In this subsection we present a new formula for the Macdonald polynomials of type $A$ that is similar to the Haglund-Haiman-Loehr one \cite{hhlcfm}. This formula will be derived by compressing the Ram-Yip formula. It also turns out that the new formula has considerably fewer terms even than the Haglund-Haiman-Loehr formula (cf. Section \ref{compry}). 

Let $\lambda = (\lambda _{1}\geq \lambda _{2}\geq \ldots \geq \lambda _{l})$ with $\lambda_l>0$ be a
partition of $m = \lambda _{1}+\ldots +\lambda _{l}$. The number of parts $l$ is known as the {\em length} of $\lambda$, and is denoted by $\ell(\lambda)$. Using standard notation, one defines
\[n(\lambda):=\sum_{i}(i-1)\lambda_i\,.\]
We identify $\lambda$ with its Young (or Ferrers) diagram
\begin{equation*}
 \{(i,j)\in  \Z _{+}\times \Z _{+}: j\leq \lambda _{i}\}\,,
\end{equation*}
whose elements are called {\it
cells}.   Diagrams are drawn in ``Japanese style'' (i.e., in the third quadrant), as shown below:
\begin{equation*}
\lambda =(2,2,2,1) = \tableau{{}&{}\\ {}&{}\\{}&{}\\&{}}\;;
\end{equation*}
the rows and columns are increasing in the negative direction of the axes. We denote, as usual, by $\lambda'$ the conjugate partition of $\lambda$ (i.e., the reflection of the diagram of $\lambda$ in the line $y=-x$, which will be drawn in French style). For any cell $u=(i,j)$ of $\lambda$ with $j\ne 1$, denote the cell $v=(i,j-1)$ directly to the right of $u$ by $\rt(u)$. 

Two cells $u,v\in \lambda $ are said to {\it attack} each other if either
\begin{itemize}
\item [(i)] they are in the same column: $u = (i,j)$, $v = (k,j)$; or
\item [(ii)] they are in consecutive columns, with the cell in the left column
strictly above the one in the right column: $u=(i,j)$,
$v=(k,j-1)$, where $i<k$.
\end{itemize}
The figure below shows the two types of pairs of attacking cells.
\begin{equation*}
\text{(i)}\quad \tableau{{\bullet}&{}\\ {}&{}\\{\bullet}&{}\\&{}} \, ,\qquad
\text{(ii)}\quad  \tableau{{\bullet}&{}\\ {}&{}\\{}&{\bullet}\\&{}}\; .
\end{equation*}

\begin{remark} The main difference in our approach compared to the Haglund-Haiman-Loehr one is in the definition of attacking cells; note that in \cite{hhlcfm} these cells are defined similarly, except that $u=(i,j)$ and $v=(k,j-1)$ with  $i>k$ attack each other.
\end{remark} 

A {\it filling} is a function
$\sigma \::\: \lambda \rightarrow [n]:=\{1,\ldots,n\}$ for some $n$, that is, an assignment of values in $[n]$ to the cells of $\lambda$.  As usual, we define the content of a filling $\sigma$ as ${\rm content}(\sigma):=(c_1,\ldots,c_n)$, where $c_i$ is the number of entries $i$ in the filling, i.e., $c_i:=|\sigma^{-1}(i)|$. The monomial $x^{{\rm content}(\sigma)}$ of degree $m$ in the variables $x_{1},\ldots,x_n$ is then given by
\begin{equation*}\label{e:xsigma}
x^{{\rm content}(\sigma)} := x_1^{c_1}\ldots,x_n^{c_n}\,.
\end{equation*}

\begin{definition}\label{deff} A filling $\sigma\::\:\lambda\rightarrow [n]$ is called {\em nonattacking} if $\sigma(u)\ne\sigma(v)$ whenever  $u$ and $v$ attack each other. 
Let ${\mathcal T}(\lambda,n)$ denote the set of nonattacking fillings.
\end{definition}

\begin{definition} Given a filling $\sigma$ of $\lambda$, let
\[\Des(\sigma):=\{(i,j)\in\lambda\::\:(i,j+1)\in\lambda\,,\;\;\sigma(i,j)>\sigma(i,j+1)\}\,.\]
Also let 
\[\Diff(\sigma):=\{(i,j)\in\lambda\::\:(i,j+1)\in\lambda\,,\;\;\sigma(i,j)\ne\sigma(i,j+1)\}\,.\]
\end{definition}

We define a reading order   on the cells of $\lambda$ as  the total order
given by reading each column from top to bottom, and by considering the columns from right to left (smallest to largest). Note that this is a different reading order than the usual (French or Japanese) ones.

\begin{definition} An {\it
inversion} of $\sigma $ is a pair of entries $\sigma (u)>\sigma (v)$,
where $u$ and $v$ attack each other, and $u$ precedes $v$ in the
considered reading order. Let ${\rm Inv}(\sigma)$ denote the set of inversions of $\sigma$.
\end{definition}

Here are two examples of inversions, where $a<b$:
\begin{equation*}
\tableau{{b}&{}\\ {}&{}\\{a}&{}\\&{}} \, ,\qquad 
\tableau{{a}&{}\\ {}&{}\\{}&{b}\\&{}}\,.
\end{equation*}

The {\em arm} of a cell $u\in\lambda$ is the number of cells strictly to the left of $u$ in the same row; similarly, the {\em leg} of $u$ is the number of cells strictly below $u$ in the same column, as illustrated below.
\begin{equation*}
\tableau{{a}&{\bullet}\\ {}&{l}\\{}&{l}\\&{l}} \, ,\qquad \arm(\bullet)=1\,,\;\;\;\leg(\bullet)=3\,.
\end{equation*}

\begin{definition} The {\em maj statistic} on fillings $\sigma$ is defined by
\[\maj(\sigma)=\sum_{u\in\Des(\sigma)}\arm(u)\,.\]
The {\em inversion statistic}  is defined by 
\[\inv(\sigma)=|{\rm Inv}(\sigma)|-\sum_{u\in\Des(\sigma)}\leg(u)\,.\]
\end{definition}

We are now ready to state a new combinatorial formula for the Macdonald $P$-polynomials in the variables $X=(x_1,\ldots,x_n)$ for a fixed $n$ and a partition $\lambda$ which corresponds to a regular weight, that is, $(\lambda_1>\ldots>\lambda_{n-1}>0)$. 

\begin{theorem}\label{hlqthm}
Given $\lambda$ as above, we have
\begin{equation}\label{hlqform}
P_{\lambda}(X;q,t) = \sum_{\sigma\in{\mathcal T}(\lambda,n)}
 t^{n(\lambda) - \inv (\sigma)}q^{\maj(\sigma)}\left(\prod_{u\in\Diff(\sigma)}\frac{1-t}{1-q^{\arm(u)}t^{\leg(u)+1}}\right) x^{{\rm content}(\sigma) } \,.
\end{equation}
\end{theorem}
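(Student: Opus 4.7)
The plan is to derive Theorem~\ref{hlqthm} by \emph{compressing} the Ram-Yip formula~(\ref{hlpform}), in the spirit of \cite{lenhlp} but adapted to the different $\lambda$-chain used in the Ram-Yip setup. Concretely, I would construct a surjective map $\Theta : \mathcal{F}(\Gamma) \to \mathcal{T}(\lambda,n)$ from folding pairs to nonattacking fillings, and then show that, for each $\sigma \in \mathcal{T}(\lambda,n)$, the sum of the Ram-Yip contributions over the fiber $\Theta^{-1}(\sigma)$ equals the single term indexed by $\sigma$ on the right-hand side of~(\ref{hlqform}). Theorem~\ref{hlqthm} then follows term by term from Theorem~\ref{hlpthm}.

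For the construction, I would specialize to type $A_{n-1}$ with positive roots $e_i - e_k$ for $i<k$, and build $\Gamma$ column by column, grouping its positions into blocks indexed by the cells of $\lambda$ in the reading order defined above (right to left, each column top to bottom). The key requirement on $\Gamma$ is that, at a position $j$ distinguished for a cell $u \in \lambda$, the multiplicity $l_j$ equals $\arm(u)$ and the coheight $\langle \rho, \beta_j^\vee \rangle$ equals $\leg(u)+1$; these identifications are precisely what is needed to match the denominators in~(\ref{hlqform}). Given a folding pair $(w,J)$, I would define $\Theta(w,J) = \sigma$ by reading off, for each cell $u$, the image under the current permutation of the Bruhat chain $w, wr_{j_1}, \ldots, w\phi(J)$ at the position distinguished for $u$. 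Two items have to be verified at this stage: that $\sigma$ is nonattacking, which reduces to Proposition~\ref{admpairs}(1) together with the observation that equal entries in attacking cells would force a degenerate Bruhat step; and that $x^{w(\mu(J))} = x^{{\rm content}(\sigma)}$, which follows from Proposition~\ref{admpairs}(2) applied cell by cell.

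The main obstacle is the fiber evaluation, which carries all of the combinatorial content. I expect $\Theta^{-1}(\sigma)$ to factor cell by cell, with the local sum at a cell $u$ (with $\rt(u) \in \lambda$) running over the positive and negative foldings at positions in its block of $\Gamma$. The geometric identity
\[
(1-t)\sum_{k\ge 0}(q^a t^b)^k = \frac{1-t}{1-q^a t^b}\,,
\]
with $a = \arm(u)$ and $b = \leg(u)+1$, combined with the mixing of $J^+$ and $J^-$ contributions in~(\ref{hlpform}), should produce a factor $(1-t)/(1-q^{\arm(u)}t^{\leg(u)+1})$ when $u \in \Diff(\sigma)$ and cancel to $1$ otherwise. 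In parallel, the monomial $t^{\frac{1}{2}(\ell(w)-\ell(w\phi(J))-|J|)}$ and the $t^{\langle\rho,\beta_j^\vee\rangle}$ factors must telescope to $t^{n(\lambda)-\inv(\sigma)}$ after accounting for leg corrections at descents, while the $q^{l_j}$ factors from $J^-$ must combine to $q^{\maj(\sigma)}$; the identifications $l_j = \arm(u)$ and $\langle\rho,\beta_j^\vee\rangle = \leg(u)+1$ ensure that the powers match provided descents of $\sigma$ correspond to precisely those cells whose local sum is forced to carry a $q$-weighted contribution. Pinning down this correspondence and, in particular, reconciling the altered definition of attacking cells (compared to~\cite{hhlcfm}) that is forced by the Ram-Yip choice of $\lambda$-chain, is the technical heart of the argument.
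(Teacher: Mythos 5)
Your overall architecture coincides with the paper's: a surjective filling map from folding pairs to nonattacking fillings (the paper's $f$, defined by $\sigma(i,j):=\pi_j(i)$ with $\pi_j=wT_{\lambda_1}\cdots T_{j+1}$), followed by a fiber-by-fiber evaluation that factors over the columns of $\lambda$ and then over the cells within each column. The gap is in the mechanism you propose for the local evaluation. The denominators $1-q^{l_j}t^{\langle\rho,\beta_j^\vee\rangle}$ are already present in every individual Ram--Yip term, so there is no geometric series $(1-t)\sum_{k\ge 0}(q^at^b)^k$ to be summed: the fiber over a fixed filling is finite, and what actually has to be shown is that a finite sum of \emph{products} of factors $\frac{1}{1-q^at^{c}}$ and $\frac{q^at^{c}}{1-q^at^{c}}$, with the exponent $c$ \emph{varying} over the block of positions attached to a cell, collapses to a single such factor. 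Relatedly, your ``key requirement'' that $\langle\rho,\beta_j^\vee\rangle=\leg(u)+1$ at the positions attached to $u$ is false as stated: for the roots $(i,k)$ in the block for cell $(i,j-1)$ the coheight is $k-i$, which runs over a whole interval of values; the exponent $\leg(u)+1$ emerges only after the compression, as the one surviving denominator. (The identification $l_j=\arm(i,j-1)$, on the other hand, does hold uniformly over the block.)

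The real content of the proof is precisely the identity you have deferred. The paper isolates it as Proposition~\ref{p2cols0}: for a single row-block $\Gamma^r(1,p)=((1,p+2),\ldots,(1,n))$ and a target value $b$ for position $1$, the sum over all folding subsets $T$ of this block with $wT(1)=b$ of $t^{\frac12(\ell(wT)-\ell(w)-|T|)}(1-t)^{|T|}\prod(\cdots)$ equals $t^{N_{ab}(w[2,p+1])}(1-t)/(1-qt^{p+1})$ when $a=w(1)<b$, and $qt^{p-N_{ba}(w[2,p+1])}(1-t)/(1-qt^{p+1})$ when $a>b$. This is proved by a decreasing induction on $p$ with three cases according to the relative order of $a$, $b$ and the intermediate entry $c=w(p+2)$, and it is here that the length differences $\ell(wT)-\ell(w)$ get converted into the counting statistics $N_{ab}$ that eventually assemble into $\inv(\sigma)$. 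A second bookkeeping step (Proposition~\ref{twocol}) multiplies these row contributions over a column and matches the resulting exponent $N_{a\wedge a}-N_{d\wedge d}$ with $\binom{|C_1|}{2}-\inv(C_2C_1)+\inv(C_2)$ minus the leg correction, which is what produces $t^{n(\lambda)-\inv(\sigma)}$ and $q^{\maj(\sigma)}$ globally. Without some version of these two computations your argument does not close; the geometric-series heuristic will not produce them.
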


\section{The compression phenomenon}

\subsection{Specializing the Ram-Yip formula to type $A$}\label{specschwer} We now restrict ourselves to the root system of type $A_{n-1}$, fow which the Weyl group $W$ is the symmetric group $S_n$. Permutations $w\in S_n$ are written in one-line notation $w=w(1)\ldots w(n)$. 
We can identify the space $\h_\R^*$ with the quotient space 
$V:=\R^n/\R(1,\ldots,1)$,
where $\R(1,\ldots,1)$ denotes the subspace in $\R^n$ spanned 
by the vector $(1,\ldots,1)$.  
The action of the symmetric group $S_n$ on $V$ is obtained 
from the (left) $S_n$-action on $\R^n$ by permutation of coordinates.
Let $\varepsilon_1,\ldots,\varepsilon_n\in V$ 
be the images of the coordinate vectors in $\R^n$.
The root system $\Phi$ can be represented as 
$\Phi=\{\alpha_{ij}:=\varepsilon_i-\varepsilon_j \::\: i\ne j,\ 1\leq i,j\leq n\}$.
The simple roots are $\alpha_i=\alpha_{i,i+1}$, 
for $i=1,\ldots,n-1$.
The fundamental weights are $\omega_i = \varepsilon_1+\ldots +\varepsilon_i$, 
for $i=1,\ldots,n-1$. 
The weight lattice is $\Lambda=\Z^n/\Z(1,\ldots,1)$. A dominant weight $\lambda=\lambda_1\varepsilon_1+\ldots+\lambda_{n-1}\varepsilon_{n-1}$ is identified with the partition $(\lambda _{1}\geq \lambda _{2}\geq \ldots \geq \lambda _{n-1}\geq\lambda_n=0)$ of length at most $n-1$. We fix such a partition $\lambda$ for the remainder of this paper, and assume that the corresponding weight is regular, i.e., $(\lambda _{1}> \lambda _{2}> \ldots > \lambda _{n-1} >\lambda_n=0)$. 

For simplicity, we use the same notation $(i,j)$ with $i<j$ for the root $\alpha_{ij}$ and the reflection $s_{\alpha_{ij}}$, which is the transposition of $i$ and $j$.  Consider the following chain of roots, denoted by $\Gamma(k)$:
\begin{equation}\label{omegakchain}\begin{array}{lllll}
(&\!\!\!\!(k,n),&(k,n-1),&\ldots,&(k,k+1)\,,\\
&\!\!\!\!(k-1,n),&(k-1,n-1),&\ldots,&(k-1,k+1)\,,\\
&&&\ldots\\
&\!\!\!\!(1,n),&(1,n-1),&\ldots,&(1,k+1)\,\,)\,.
\end{array}\end{equation}
Denote by $\Gamma'(k)$ the chain of roots obtained by removing the root $(i,k+1)$ at the end of each row. Now define a chain $\Gamma$ as a concatenation $\Gamma:=\Gamma_{\lambda_1}\ldots\Gamma_2$, where 
\[\Gamma_j:=\casetwo{\Gamma'(\lambda'_j)}{\mbox{$j=\min\;\{i\::\:\lambda_i'=\lambda_j'\}$}}{\Gamma(\lambda'_j)}\]
Based on the interlacing condition in \cite{lapcmc}[Definition 4.1 and Proposition 4.4], it is not hard to verify that $\Gamma$ is a $\lambda$-chain in the sense defined in Section \ref{alcovewalks}. 

Alternatively, one can argue based on the chain considered in \cite{lenhlp}[Section 3.1], which we denote here by $\widehat{\Gamma}$. We proved in \cite[Corollary 15.4]{lapawg} that, for any $k=1,\ldots,n-1$,
we have the following chain of roots corresponding to an alcove path from $A_\circ$ to $A_{\circ}+\omega_k$, denoted by $\widehat{\Gamma}(k)$:
\begin{equation}\begin{array}{lllll}
(&\!\!\!\!(1,n),&(1,n-1),&\ldots,&(1,k+1)\,,\\
&\!\!\!\!(2,n),&(2,n-1),&\ldots,&(2,k+1)\,,\\
&&&\ldots\\
&\!\!\!\!(k,n),&(k,n-1),&\ldots,&(k,k+1)\,\,)\,.
\end{array}\end{equation}
Hence, we can construct a chain corresponding to an alcove path from $A_\circ$ to $A_\circ+\lambda$ as a concatenation $\widehat{\Gamma}:=\widehat{\Gamma}_{\lambda_1}\ldots\widehat{\Gamma}_1$, where $\widehat{\Gamma}_j=\widehat{\Gamma}(\lambda'_j)$. It is not hard to show that we can use moves of the form $((i,j),(i,k),(j,k))\leftrightarrow((j,k),(i,k),(i,j))$ with $i<j<k$ to change $\widehat{\Gamma}$ into a chain/alcove path which has $\Gamma$ as an initial segment (the mentioned moves translate into Coxeter moves for the reduced words in the affine Weyl group corresponding to the alcove paths); moreover, the reflections in the final segment, when applied from left to right, give rise to a saturated chain in Bruhat order on $S_n$ from the identity to the longest permutation. 

The $\lambda$-chain $\Gamma$ is fixed for the remainder of this paper. Thus, we can replace the notation ${\mathcal F}(\Gamma)$ with ${\mathcal F}(\lambda)$.

\begin{example}\label{ex21} {\rm Consider $n=4$ and $\lambda =(4,3,1,0)$, for which we have the following $\lambda$-chain (the underlined pairs are only relevant in Example \ref{ex21c} below):
\begin{equation}\label{exlchain}\Gamma=\Gamma_4\Gamma_3\Gamma_2=(\underline{(1,4)},(1,3)\:|\:(2,4),\underline{(2,3)},(1,4),\underline{(1,3)}\:|\:\underline{(2,4)},(1,4))\,.\end{equation}
We represent the Young diagram of $\lambda$ inside a broken $4\times 4$ rectangle, as shown below. In this way, a transpositions $(i,j)$ in $\Gamma$ can be viewed as swapping entries in the two parts of each column (in rows $i$ and $j$, where the row numbers are also indicated below). 
\begin{equation*}
 \begin{array}{l} \tableau{{1}&{1}&{1}&{1}\\ &{2}&{2}&{2}\\&&&{3}}\\ \\
\tableau{{2}\\ {3}&{3}&{3}\\ {4}&{4}&{4}&{4}} \end{array}
\end{equation*}
}
\end{example}

Given the $\lambda$-chain $\Gamma$ above, in Section \ref{alcovewalks} we considered subsets $J=\{ j_1<\ldots< j_s\}$ of $[m]$, where $m$ is the length of the $\lambda$-chain. Instead of $J$, it is now convenient to use the subsequence of $\Gamma$ indexed by the positions in $J$. This is viewed as a concatenation with distinguished factors $T=T_{\lambda_1}\ldots T_2$ induced by the factorization of $\Gamma$ as $\Gamma_{\lambda_1}\ldots\Gamma_2$. The partition $J=J^+\sqcup J^-$ induces partitions $T=T^+\sqcup T^-$ and $T_j=T_j^+\sqcup T_j^-$.

All the notions defined in terms of $J$ are now redefined in terms of $T$. As such, from now on we will write  $\phi(T)$, $\mu(T)$, and $|T|$, the latter being the size of $T$. If $(w,J)$ is a folding pair, we will also call the corresponding $(w,T)$ a folding pair. We will use the notation ${\mathcal F}(\Gamma)$ and ${\mathcal F}(\lambda)$ accordingly.  

We denote by $wT_{\lambda_1}\ldots T_{j}$ the permutation obtained from $w$ via right multiplication by the transpositions in $T_{\lambda_1},\ldots, T_{j}$, considered from left to right. This agrees with the above convention of using pairs to denote both roots and the corresponding reflections. As such, $\phi(J)$ in (\ref{defphimu}) can now be written simply $T$. 

\begin{example}\label{ex21c}{\rm We continue Example \ref{ex21}, by picking the folding pair $(w,J)$ with $w=2341\in S_4$ and $J=\{1,4,6,7\}$ (see the underlined positions in (\ref{exlchain})). Thus, we have
\[T=T_4T_3T_2=((1,4)\:|\:(2,3),(1,3)\:|\:(2,4))\,.\]
Note that $J^+=\{1,7\}$ and $J^-=\{4,6\}$. Indeed, we have the corresponding chain in Bruhat order, where the swapped entries are shown in bold (we represent permutations as broken columns, as discussed in Example \ref{ex21}):
\[w=\begin{array}{l}\tableau{{{\mathbf 2}}} \\ \\ \tableau{{3}\\{4}\\{{\mathbf 1}}} \end{array} \:>\:\begin{array}{l}\tableau{{1}} \\ \\ \tableau{{3}\\{4}\\{2}} \end{array}\:|\: \begin{array}{l}\tableau{{{ 1}}\\{{\mathbf 3}}} \\ \\ \tableau{{{\mathbf 4}}\\{2}} \end{array}\:<\: \begin{array}{l}\tableau{{{\mathbf 1}}\\{{4}}} \\ \\ \tableau{{{\mathbf 3}}\\{2}}\end{array}\:<\: \begin{array}{l}\tableau{{3}\\{4}}\\ \\ \tableau{{1}\\{2}}\end{array}\:|\: 
\begin{array}{l}\tableau{{3}\\{{\mathbf 4}}\\{1}}\\ \\ \tableau{{{\mathbf 2}}}\end{array} 
\:>\: \begin{array}{l} \tableau{{3}\\{2}\\{1}} \\ \\ \tableau{{4}}
\end{array} \,.\]
}
\end{example}

\subsection{The filling map} Given a folding pair $(w,T)$, we consider the permutations
\[\pi_j=\pi_j(w,T):=wT_{\lambda_1}T_{\lambda_1-1}\ldots T_{j+1}\,,\]
for $j=1,\ldots,\lambda_1$. In particular, $\pi_{\lambda_1}=w$. 

\begin{definition}\label{deffill}
The {\em filling map} is the map $f$ from folding pairs $(w,T)$ to fillings $\sigma=f(w,T)$ of the shape $\lambda$, defined by
\begin{equation}\label{defswt}\sigma(i,j):=\pi_j(i)\,.\end{equation}
In other words, the $j$-th column of the filling $\sigma$ consists of the first $\lambda_j'$ entries of the permutation $\pi_j$.
\end{definition}

\begin{example} {\rm Given $(w,T)$ as in Example \ref{ex21c}, we have
\[f(w,T)=\tableau{{2}&{1}&{3}&{3}\\&{3}&{4}&{2}\\&&&{1}}\,.\]}
\end{example}

The following proposition is a similar version of \cite{lenhlp}[Proposition 3.6].

\begin{proposition}\label{weightmon} For permutation $w$ and any subsequence $T$ of $\Gamma$, we have ${\rm content}(f(w,T))=w(\mu(T))$. In particular, $w(\mu(T))$ only depends on $f(w,T)$. \end{proposition}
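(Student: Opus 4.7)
By the definition of $f$, cell $(i,j)$ contributes $\varepsilon_{\pi_j(i)}$ to the content; summing over the $\lambda'_j$ cells of column $j$ yields $\sum_{i=1}^{\lambda'_j}\varepsilon_{\pi_j(i)}=\pi_j(\omega_{\lambda'_j})$, since $\omega_{\lambda'_j}=\varepsilon_1+\cdots+\varepsilon_{\lambda'_j}$ and $\pi_j$ permutes the coordinates. Hence
\[{\rm content}(f(w,T))=\sum_{j=1}^{\lambda_1}\pi_j(\omega_{\lambda'_j}),\]
and the task reduces to showing that this equals $w(\mu(T))$.

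My strategy is to expand the affine reflections via $\widehat{r}_i(x)=s_{\beta_i}(x)+l_i\beta_i$, which lets one write $\widehat{r}_{j_1}\cdots\widehat{r}_{j_s}$ as translation by $\tau=\sum_i l_{j_i}\,r_{j_1}\cdots r_{j_{i-1}}(\beta_{j_i})$ composed with the linear map $\phi(T)=r_{j_1}\cdots r_{j_s}$. Applying this to $\lambda$ and then to $w$ yields
\[w(\mu(T))=\pi_1(\lambda)+\sum_{i=1}^{s}l_{j_i}\,(wr_{j_1}\cdots r_{j_{i-1}})(\beta_{j_i}).\]
Since $\lambda=\sum_j\omega_{\lambda'_j}$ and $\pi_1$ is linear, $\pi_1(\lambda)=\sum_j\pi_1(\omega_{\lambda'_j})$, so the identity to prove becomes
\[\sum_{j=2}^{\lambda_1}(\pi_j-\pi_1)(\omega_{\lambda'_j})=\sum_{i=1}^{s}l_{j_i}\,(wr_{j_1}\cdots r_{j_{i-1}})(\beta_{j_i}).\]

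I would establish this by grouping both sides by block $\Gamma_k$ ($k=2,\ldots,\lambda_1$). Within block $\Gamma_k$, the Bruhat-chain elements $wr_{j_1}\cdots r_{j_{i-1}}$ range from $\pi_k$ at the start of the block to $\pi_{k-1}=\pi_kT_k$ at its end, so the block-$\Gamma_k$ part of the right-hand side depends only on $\pi_k$ and the reflections in $T_k$. Matching it to $(\pi_k-\pi_1)(\omega_{\lambda'_k})=\pi_k(\omega_{\lambda'_k}-T_kT_{k-1}\cdots T_2(\omega_{\lambda'_k}))$ requires exploiting the structure of the $\lambda$-chain $\Gamma$: in each block $\Gamma_l$ every root $(a,b)$ satisfies $a\le\lambda'_l<b$, and for $l<k$ (where $\lambda'_l\ge\lambda'_k$) a reflection in $T_l$ fixes $\omega_{\lambda'_k}$ unless $a\le\lambda'_k$, in which case it shifts it by $\varepsilon_b-\varepsilon_a$ (with $b>\lambda'_k$). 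The main obstacle will be the resulting bookkeeping, which tracks how these shift contributions from blocks $l<k$ rearrange into the block-$\Gamma_k$ partial sum on the right. I would carry this out by closely paralleling the argument of \cite{lenhlp}[Proposition 3.6] for the Schwer-type formula, adapting it to accommodate the negative foldings that appear in the Ram-Yip setting.
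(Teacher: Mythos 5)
The paper gives no proof of this proposition at all --- it only remarks that it is a variant of \cite{lenhlp}[Proposition 3.6] --- so your argument has to stand on its own. Its first half does: the identification ${\rm content}(f(w,T))=\sum_{j=1}^{\lambda_1}\pi_j(\omega_{\lambda'_j})$ is correct, the expansion $w(\mu(T))=\pi_1(\lambda)+\sum_{i=1}^{s}l_{j_i}\,(wr_{j_1}\cdots r_{j_{i-1}})(\beta_{j_i})$ is correct, and the identity you reduce to is in fact true (it checks out on Example \ref{ex21c}). The gap is that this identity is never proved: everything after ``I would establish this by'' is a plan, and the plan as stated does not close. You propose to match the block-$\Gamma_k$ portion of the right-hand side, which depends only on $\pi_k$ and $T_k$, against $(\pi_k-\pi_1)(\omega_{\lambda'_k})$, which depends on all of $T_k,T_{k-1},\ldots,T_2$; these two quantities are not equal in general. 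In Example \ref{ex21c} with $k=3$, the block-$\Gamma_3$ part of the right-hand side is $2\varepsilon_1-\varepsilon_3-\varepsilon_4$, while $(\pi_3-\pi_1)(\omega_{2})=\varepsilon_1-\varepsilon_2$. So the ``bookkeeping'' you defer is not a routine verification of your proposed matching --- that matching is false term by term.

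The repair is to telescope the other way. Write $\pi_j-\pi_1=\sum_{k=2}^{j}(\pi_k-\pi_{k-1})$ and interchange the order of summation, so the left-hand side becomes $\sum_{k=2}^{\lambda_1}(\pi_k-\pi_{k-1})(\lambda^{(k)})$, where $\lambda^{(k)}:=\sum_{j\ge k}\omega_{\lambda'_j}$ is $\lambda$ with its first $k-1$ columns deleted. Each summand now depends only on $\pi_k$ and $T_k$ (since $\pi_{k-1}=\pi_kT_k$), and the genuinely local identity
\[(\pi_k-\pi_{k-1})(\lambda^{(k)})=\sum_{j_i\in\Gamma_k}l_{j_i}\,(wr_{j_1}\cdots r_{j_{i-1}})(\beta_{j_i})\]
follows by induction on $|T_k|$: appending a transposition $(a,b)\in\Gamma_k$ to the segment of $T_k$ processed so far changes the left-hand side by $\pi'(\lambda^{(k)})-\pi'(a,b)(\lambda^{(k)})=(\lambda_a-k+1)\,\pi'(\varepsilon_a-\varepsilon_b)$, where $\pi'$ is the permutation reached just before this step; here the coordinate $b$ of $\lambda^{(k)}$ vanishes because $a\le\lambda'_k<b$, and $\lambda_a-k+1$ is exactly $l_j$ for the occurrence of $(a,b)$ in $\Gamma_k$ (one occurrence in each of the blocks $\Gamma_k,\Gamma_{k+1},\ldots,\Gamma_{\lambda_a}$, the omitted roots in the primed blocks never falling in this range). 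With this reorganization your strategy goes through; as written, the decisive step is missing and the proposed grouping would fail.
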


\begin{proposition}\label{surjmap}
We have $f(\mathcal{F}(\lambda))\subseteq{\mathcal T}(\lambda,n)$. If the partition $\lambda$ corresponds to a regular weight, then the map $f\::\:{\mathcal F}(\lambda)\rightarrow{\mathcal T}(\lambda,n)$ is surjective. 
\end{proposition}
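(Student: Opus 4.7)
The proposition consists of two claims: the inclusion and surjectivity under regularity. I plan to tackle them in that order.

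For the inclusion $f(\mathcal{F}(\lambda)) \subseteq \mathcal{T}(\lambda,n)$, fix $(w,T) \in \mathcal{F}(\lambda)$, set $\sigma := f(w,T)$, and verify the two nonattacking conditions. Condition (i) is immediate because $\sigma(\cdot,j) = (\pi_j(1),\ldots,\pi_j(\lambda_j'))$ consists of distinct entries. Condition (ii), applied to cells $(i,j)$ and $(k,j-1)$ of $\lambda$ with $i<k$, is equivalent via $\pi_{j-1} = \pi_j T_j$ to the inequality $T_j(k) \ne i$. The key structural observation is that every transposition in $\Gamma_j$ has the form $(a,b)$ with $a \le \lambda_j'$ and $b > \lambda_j'$ (with $b \ge \lambda_j' + 2$ in the $\Gamma'(\lambda_j')$ case), and the ordering in $\Gamma(\lambda_j')$ is block-by-block with top index $a$ decreasing from $\lambda_j'$ down to $1$ while within each $a$-block the bottom index $b$ decreases. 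I would then prove a trajectory lemma: tracking the entry $x := \pi_j(i)$ as the transpositions of $T_j$ are applied left to right, and inducting over the blocks of $\Gamma_j$, the final position of $x$ in $\pi_{j-1}$ belongs to $\{1,\ldots,i\} \cup \{b : b > \lambda_j'\}$. Blocks with top index $a > i$ leave $x$ fixed at position $i$; block $a = i$ may send $x$ to some bottom position $b_1$; each later block $a < i$ either leaves $x$ in place or shuttles it between the top position $a$ and a strictly smaller bottom position, but never deposits $x$ at a top position strictly greater than $i$.

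Granted the trajectory lemma, $T_j(k) = i$ forces $k \le i$ or $k > \lambda_j'$. Case A ($k \le \lambda_j'$) contradicts $i < k$ at once. Case B ($k > \lambda_j'$) uses regularity: since $\lambda_{j-1}' - \lambda_j' \in \{0,1\}$, the membership $(k,j-1) \in \lambda$ combined with $k > \lambda_j'$ forces $k = \lambda_j' + 1$ and $\lambda_{j-1}' = \lambda_j' + 1$; this puts us in the $\Gamma_j = \Gamma'(\lambda_j')$ regime, and $\Gamma'(\lambda_j')$ omits every transposition touching position $\lambda_j' + 1$, so $T_j$ fixes that position, giving $T_j(k) = k = \lambda_j' + 1 > \lambda_j' \ge i$.

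For surjectivity, given $\sigma \in \mathcal{T}(\lambda,n)$ I would construct a preimage $(w,T)$ by reverse induction on $j$, producing $\pi_{\lambda_1}, \pi_{\lambda_1-1}, \ldots, \pi_1$ and $T_{\lambda_1}, T_{\lambda_1-1}, \ldots, T_2$ in turn. Regularity gives $\lambda_{\lambda_1}' = 1$, so $\pi_{\lambda_1}$ is constrained only by $\pi_{\lambda_1}(1) = \sigma(1,\lambda_1)$; set $w := \pi_{\lambda_1}$ by placing the remaining entries of $[n]$ in, say, increasing order. Inductively, given $\pi_j$ whose top $\lambda_j'$ entries match column $j$ of $\sigma$, I would build a subsequence $T_j$ of $\Gamma_j$ such that $\pi_{j-1} := \pi_j T_j$ has top $\lambda_{j-1}'$ entries matching column $j-1$. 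Since $\lambda_{j-1}' - \lambda_j' \in \{0,1\}$, the transition between columns entails only a few swaps between the top block and the bottom of $\pi_j$, and these can be realized by transpositions of $\Gamma_j$ selected in an order compatible with the block structure (\ref{omegakchain}); the nonattacking condition on $\sigma$ is precisely what ensures that the selection is consistent with the prescribed order.

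The principal obstacle is the trajectory lemma: the block-wise induction must simultaneously verify that successive bottom positions visited by $x$ strictly decrease and that $x$ is never carried to a top position exceeding $i$, both of which rest on the dual decrease of top and bottom indices within the listing of $\Gamma_j$. A parallel obstacle in the surjectivity step is checking that the algorithmic construction of $T_j$ respects the fixed left-to-right ordering of $\Gamma_j$ rather than producing an arbitrary product of transpositions; here again regularity, via the gap $\lambda_{j-1}' - \lambda_j' \in \{0,1\}$, and the nonattacking property of $\sigma$ are essential.
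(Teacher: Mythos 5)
Your argument for the inclusion $f(\mathcal{F}(\lambda))\subseteq\mathcal{T}(\lambda,n)$ is sound: the trajectory lemma is correct as sketched (within each block of $\Gamma_j$ the entry starting at position $i$ can only move to a bottom position $b>\lambda_j'$, return to a top position $a<i$, or drop to a strictly smaller bottom position, so it never lands at a top position in $\{i+1,\ldots,\lambda_j'\}$), and combined with the observation that $\Gamma'(\lambda_j')$ fixes position $\lambda_j'+1$ this gives exactly what is needed. In substance this is the same argument as the paper's, which likewise reads the claim off the block structure of $\Gamma_j$ in (\ref{omegakchain}).

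The surjectivity argument, however, has a genuine gap: the direction of your induction does not work. You build $w=\pi_{\lambda_1}$ from the shortest column and march toward column $1$, but whenever $\lambda_{j-1}'=\lambda_j'+1$ the factor $\Gamma_j=\Gamma'(\lambda_j')$ contains no transposition touching position $\lambda_j'+1$, so $\pi_{j-1}(\lambda_{j-1}')=\pi_j(\lambda_{j-1}')$ is frozen before $T_j$ is even chosen; your inductive hypothesis only controls the top $\lambda_j'$ entries of $\pi_j$ and says nothing about position $\lambda_j'+1$, and your initial choice of $w$ (``remaining entries in increasing order'') does not anticipate these constraints. Concretely, take $n=3$ and $\lambda=(2,1,0)$, so that $\Gamma=\Gamma_2=\Gamma'(1)=((1,3))$, and consider the nonattacking filling with $\sigma(1,2)=1$, $\sigma(1,1)=2$, $\sigma(2,1)=3$. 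Your recipe gives $w=123$, and neither $T_2=\emptyset$ nor $T_2=((1,3))$ produces first column $(2,3)$, since position $2$ is never moved; the actual preimage requires $w=132$. The paper avoids this entirely by running the construction in the opposite direction: regularity gives $\lambda_1'=n-1$, so column $1$ determines $\pi_1$ completely, and passing from $\pi_{j-1}$ to $\pi_j$ only ever requires fetching an entry from a position greater than $\lambda_{j-1}'$ (guaranteed by the nonattacking condition), which is always available in $\Gamma_j$; the positions beyond $\lambda_j'$ are simply carried along and are automatically correct when a longer column to the right is reached. If you insist on your direction, you would have to strengthen the inductive hypothesis to prescribe, already in $\pi_j$, the entries at the positions where later columns grow, which is considerably more delicate.
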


\begin{proof}
 Let $u=(i,j)$ be a cell of $\lambda$, and $\sigma=f(w,T)$. We check that $\sigma$ satisfies the condition in Definition \ref{deff}. Let $v=(k,j)$ with $k<i$. We clearly have $\sigma(u)\ne\sigma(v)$ because $\sigma(u)=\pi_j(i)$ and $\sigma(v)=\pi_j(k)$. For the same reason, if $\sigma(u)=\sigma(\rt(u))$, then $\sigma(v)\ne\sigma(\rt(u))$. Otherwise, consider the subchain of the Bruhat chain corresponding to $(w,T)$ which starts at $\pi_j$ and ends at $\pi_{j-1}$. There is a permutation $\pi$ in this subchain such that $\sigma(v)=\pi(k)$ and $\sigma(\rt(u))$ is the entry in some position greater than $k$, to be swapped with position $i$; this follows from the structure of the segment $\Gamma_j$ (see (\ref{omegakchain})) in the $\lambda$-chain $\Gamma$. Thus, we have $\sigma(v)\ne\sigma(\rt(u))$ once again. We conclude that $\sigma\in{\mathcal T}(\lambda,n)$. 

Now consider $\lambda$ corresponding to a regular weight, and $\sigma\in{\mathcal T}(\lambda,n)$. We construct a chain in the Bruhat order on $S_n$, as follows. Let $\pi_1$ be the unique permutation such that $\pi_1(i)=\sigma(i,1)$ for $1\le i\le\lambda_1'$. Assume that we constructed the Bruhat chain up to $\pi_{j-1}$. For each $i$ from 1 to $\lambda_j'$, if $\sigma(i,j)\ne\sigma(i,j-1)$, then swap the entry in position $i$ of the current permutation with the entry $\sigma(i,j)$; the latter is always found in a position greater than $\lambda_{j-1}'$ because $\sigma\in{\mathcal T}(\lambda,n)$. The result is a permutation $\pi_j$ whose first $\lambda_j'$ entries form column $j$ of $\sigma$. Continue in this way up to column $\lambda_1$. Then set $w:=\pi_{\lambda_1}$. The obtained Bruhat chain determines a folding pair $(w,T)$ mapped to $\sigma$ by $f$. 
\end{proof}

Based on Proposition \ref{surjmap}, from now on we consider the filling map as a map $f\::\:{\mathcal F}(\lambda)\rightarrow{\mathcal T}(\lambda,n)$. 

\subsection{Compressing the Ram-Yip formula}\label{compry} From now on, we assume that the partition $\lambda$  corresponds to a regular weight. 

We start by rewriting the Ram-Yip formula (\ref{hlpform}) in the type $A$ setup and by recalling our new formula (\ref{hlqform}) in terms of fillings:
\begin{align*}
& P_{\lambda}(X;q,t)=\sum_{(w,T)\in{\mathcal F}(\Gamma)}t^{\frac{1}{2}(\ell(w)-\ell(wT)-|T|)}\,(1-t)^{|T|}\!\!\left(\prod_{j,(i,k)\in T_j^+}\frac{1}{1-q^{\arm(i,j-1)}t^{k-i}}\right)\times\\
&\;\;\;\;\;\;\;\;\;\;\;\;\;\;\;\;\;\;\;\;\;\;\times\left(\prod_{j,(i,k)\in T_j^-}\frac{q^{\arm(i,j-1)}t^{k-i}}{1-q^{\arm(i,j-1)}t^{k-i}}\right)x^{w(\mu(T))}\,,\\
& P_{\lambda}(X;q,t) = \sum_{\sigma\in{\mathcal T}(\lambda,n)}
 t^{n(\lambda) - \inv (\sigma)}q^{\maj(\sigma)}\left(\prod_{u\in\Diff(\sigma)}\frac{1-t}{1-q^{\arm(u)}t^{\leg(u)+1}}\right) x^{{\rm content}(\sigma) } \,.
\end{align*}

We will now describe the way in which the second formula can be obtained by compressing the first one.

\begin{theorem}\label{mainthm}
Given any $\sigma \in{\mathcal T}(\lambda,n)$, we have $f^{-1}(\sigma)\ne\emptyset$ and $x^{w(\mu(T))}=x^{{\rm content}(\sigma)}$ for all $(w,T)\in f^{-1}(\sigma)$. Furthermore, we have
\begin{align*}&\sum_{(w,T)\in f^{-1}(\sigma)}\!\!\!\!\!\!\!\!\!t^{\frac{1}{2}(\ell(w)-\ell(wT)-|T|)}\,(1-t)^{|T|}\left(\prod_{j,(i,k)\in T_j^+}\frac{1}{1-q^{\arm(i,j-1)}t^{k-i}}\right)\left(\prod_{j,(i,k)\in T_j^-}\frac{q^{\arm(i,j-1)}t^{k-i}}{1-q^{\arm(i,j-1)}t^{k-i}}\right)= \\&=t^{n(\lambda) - \inv (\sigma)}q^{\maj(\sigma)}\left(\prod_{u\in\Diff(\sigma)}\frac{1-t}{1-q^{\arm(u)}t^{\leg(u)+1}}\right) \,.\end{align*}
\end{theorem}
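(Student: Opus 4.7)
The first two assertions follow directly from earlier results: Proposition~\ref{surjmap} gives $f^{-1}(\sigma)\ne\emptyset$ (invoking regularity of $\lambda$), while Proposition~\ref{weightmon} yields $x^{w(\mu(T))}=x^{\mathrm{content}(\sigma)}$ for every $(w,T)\in f^{-1}(\sigma)$. The monomial $x^{\mathrm{content}(\sigma)}$ therefore factors out of the sum, and the work reduces to verifying the coefficient identity.

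My plan for the coefficient identity is a cell-by-cell factorization of the sum $\sum_{(w,T)\in f^{-1}(\sigma)}$, matching the product $\prod_{u\in\Diff(\sigma)}\frac{1-t}{1-q^{\arm(u)}t^{\leg(u)+1}}$ together with the prefactor $t^{n(\lambda)-\inv(\sigma)}q^{\maj(\sigma)}$ on the right-hand side to local contributions at each cell $u=(i,j-1)\in\lambda$. I would begin with a preliminary simplification: Proposition~\ref{admpairs}(1) identifies positive folds with descents and negative folds with ascents in the Bruhat chain, so $\ell(w)-\ell(w\phi(T))=|T^+|-|T^-|$ and the $t$-prefactor $t^{\frac{1}{2}(\ell(w)-\ell(wT)-|T|)}$ reduces to $t^{-|T^-|}$. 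Absorbing this factor into the negative-fold product rewrites the Ram--Yip coefficient as
\[
\prod_{(i,k)\in T^+}\frac{1-t}{1-q^{\arm(i,j-1)}\,t^{k-i}}\;\cdot\;\prod_{(i,k)\in T^-}\frac{(1-t)\,q^{\arm(i,j-1)}\,t^{k-i-1}}{1-q^{\arm(i,j-1)}\,t^{k-i}}\,,
\]
where each fold position $(i,k)\in T_j^{\pm}$ is naturally assigned to the cell $u=(i,j-1)\in\lambda$, with $\arm(u)=\lambda_i-j+1$ and $\leg(u)+1=\lambda'_{j-1}-i+1$.

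The heart of the argument is to exhibit a product decomposition of the fiber $f^{-1}(\sigma)$. Within row $i$ of segment $\Gamma_j$, every transposition involves position $i$ and some position $l>\lambda'_j$, so choices in distinct rows of a segment do not interact; combined with the fact that the regularity of $\lambda$ and the construction from the proof of Proposition~\ref{surjmap} essentially force the intermediate permutations $\pi_j$, this identifies $f^{-1}(\sigma)$ with a product, indexed by cells $u=(i,j-1)$, of the local choices in row $i$ of $\Gamma_j$. The local admissibility condition comes from the key combinatorial observation that applying transpositions $(i,l_1),\ldots,(i,l_p)$ in decreasing order of $l$ deposits the original value at position $l_p$ into position $i$: the chosen subset must be empty when $u\notin\Diff(\sigma)$, and must have least element equal to the unique tail position $l^*$ of $\sigma(i,j-1)$ when $u\in\Diff(\sigma)$, with an arbitrary subset of $\{l:l>l^*\}$ adjoined. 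Summing the per-fold factors over admissible subsets would then use the geometric-series identity $1+\frac{(1-t)A}{1-A}=\frac{1-tA}{1-A}$ to telescope the optional positions down into the compact factor $\frac{1-t}{1-q^{\arm(u)}t^{\leg(u)+1}}$, multiplied by a monomial in $t,q$ controlled by the descent structure at $u$. Aggregating across all cells, the $q$-contributions collect over $u\in\Des(\sigma)$ to yield $q^{\maj(\sigma)}$, while the $t$-contributions combine, using $n(\lambda)=\sum_u\leg(u)$ and the definition of $\inv(\sigma)$, into $t^{n(\lambda)-\inv(\sigma)}$.

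The main obstacle is the sign analysis in the local step: since the sign of each fold is governed by value comparisons in the intermediate permutation, verifying that the geometric series telescopes cleanly for every value-comparison pattern at $u$ requires a careful combinatorial argument. A secondary technical subtlety arises when $\Gamma_j=\Gamma'(\lambda'_j)$ (i.e.\ when column $j-1$ is strictly longer than column $j$): the positions $\lambda'_j<l\le\lambda'_{j-1}$ are then touched by transpositions coming from several rows, and the cell-by-cell factorization must be amended to accommodate this coupling.
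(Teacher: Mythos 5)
Your high-level architecture (reduce to Propositions \ref{weightmon} and \ref{surjmap}, then factor the fiber sum column-by-column and row-by-row and evaluate local sums by a telescoping geometric-series argument) matches the paper's strategy, but your first reduction contains a concrete error that derails the whole $t$-count. You claim that Proposition \ref{admpairs}(1) gives $\ell(w)-\ell(w\phi(T))=|T^+|-|T^-|$, so that the prefactor reduces to $t^{-|T^-|}$. Proposition \ref{admpairs}(1) only determines the \emph{direction} of each Bruhat step; the steps $u\mapsto ur_{j}$ are reflections by arbitrary positive roots, not covers, and a single step changes length by $\pm(2N+1)$ where $N$ counts the intermediate entries (this is exactly equation (\ref{basec}) in the paper: $\tfrac12(\ell(w(1,p+2))-\ell(w)-1)=N_{ab}(w[2,p+1])$ or $-1-N_{ab}(w[2,p+1])$). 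These $t^{N_{ab}}$ contributions are not noise to be discarded: in the paper they aggregate, via the identity $N_{a\wedge a}-N_{d\wedge d}=\binom{|C_1|}{2}-\inv(C_2C_1)-L+\inv(C_2)$ in Proposition \ref{twocol}, into precisely the prefactor $t^{n(\lambda)-\inv(\sigma)}$. After your reduction to $t^{-|T^-|}$ there is no source left for this power of $t$, so your final accounting cannot close.

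A second, related gap is your justification of the product decomposition of $f^{-1}(\sigma)$. You assert that choices in distinct rows of a segment $\Gamma_j$ do not interact because each transposition involves position $i$ and some $l>\lambda_j'$; but two transpositions $(i,l)$ and $(i',l)$ from different rows share the position $l$, so the rows do interact through the tail of the intermediate permutation (in every case, not only when $\Gamma_j=\Gamma'(\lambda_j')$). The factorization is nevertheless valid, but only because the \emph{evaluated} row sums turn out to depend just on the column data $C_1(i)$, $C_2(i)$, $C_1[i+1,p]$ and not on the tail values --- a fact that requires the explicit closed-form evaluation (the paper's Proposition \ref{p2cols0}, proved by decreasing induction on $p$ with a three-way case analysis on the new entry $c$). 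You correctly identify this local sign/telescoping analysis as ``the main obstacle,'' but that is exactly where the substance of the proof lies, so as written the proposal both leaves the core computation undone and rests on a false preliminary simplification.
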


The first statement in Theorem \ref{mainthm} is just the content of Propositions \ref{weightmon} and \ref{surjmap}. The compression formula in the theorem  will be proved Section \ref{pfmain}. Then Theorem \ref{hlqthm} becomes a corollary.

In order to measure the compression phenomenon, we define the {\em compression factor} $c(\lambda)$ as the ratio of the number of terms in the Ram-Yip formula for $\lambda$ and the number of terms $t(\lambda)$ in our new formula (\ref{hlqform}); note that we have $2^mn!$ terms in the Ram-Yip formula, where $m$ is the length of the $\lambda$-chain. We also compute the ratio $r(\lambda)$ of the number of terms in the Haglund-Haiman-Loehr formula and $t(\lambda)$. We list below some examples.

\begin{equation*}
\begin{tabular}{|c|c|r|r|r|}
\hline
$\lambda$ &  $n$ & $t(\lambda)$ & $c(\lambda)$ & $r(\lambda)$
 \\ \hline
(3, 2, 1, 0) & $4$ & 288 & 1.3 & 3.0
 \\ \hline
(5, 3, 1, 0) & $4$ & 10,368  & 4.7 & 3.0
 \\ \hline
(4, 3, 2, 1, 0) & $5$ & 34,560  &  3.6 & 7.5
 \\ \hline
(5, 4, 2, 1, 0) & $5$ & 552,960  &  14.2 & 7.5
 \\ \hline
\end{tabular}
\end{equation*}

We note that the compression factor increases with the rank of the root system and the number of columns of $\lambda$. Also note that the Haglund-Haiman-Loehr formula has more terms than the new formula, and sometimes even more than the Ram-Yip formula.

\section{The proof of Theorem \ref{mainthm}}\label{pfmain}

We start with some notation related to sequences of positive integers. Given such a sequence $w$, we write $w[i,j]$ for the subsequence $w(i)w(i+1)\ldots w(j)$. We use the notation $N_a(w)$ and $N_{ab}(w)$ for the number of entries $w(i)$ with $w(i)<a$ and $a<w(i)<b$, respectively.  

Let us denote by $\rev(S)$ the reverse of the sequence $S$. For simplicity, we write $\Gamma^r(k)$ for $\rev(\Gamma(k))$. We also consider the segment of $\Gamma^r(k)$ with the first $p$ entries removed, which we denote by $\Gamma^r(k,p)$. 

\begin{proposition}\label{p2cols0}
Consider a permutation $w$ in $S_n$ and a number $b\in[n]\setminus\{a\}$, where $a:=w(1)$; also  consider an integer $p$ with $0\le p<w^{-1}(b)-1$. Then we have
\begin{align}\label{sum2cols0}
&\stacksum{T\::\:(w,T)\in{\mathcal F}(\Gamma^r(1,p))}{wT(1)=b} t^{\frac{1}{2}(\ell(wT)-\ell(w)-|T|} (1-t)^{|T|}\left(\prod_{(1,k)\in T^+}\frac{1}{1-qt^{k-1}}\right)\left(\prod_{(1,k)\in T^-}\frac{qt^{k-1}}{1-qt^{k-1}}\right) \\
&=\case{${t^{N_{ab}(w[2,p+1])}(1-t)}/{(1-qt^{p+1})}$}{$a<b$}{${qt^{p-N_{ba}(w[2,p+1])}(1-t)}/{(1-qt^{p+1})}$}\nonumber
\end{align}
\end{proposition}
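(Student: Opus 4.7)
The plan is to argue by downward induction on $p$, from the base case $p = K - 2$ (where $K := w^{-1}(b)$) down to $p = 0$. First I would set up the parametrization: since every root in $\Gamma^r(1,p) = ((1,p+2),(1,p+3),\ldots,(1,n))$ is of the form $(1,k)$, a subsequence $T$ corresponds to an increasing sequence $p+2 \le k_1 < \cdots < k_s \le n$. A short induction on $s$ gives $wT(1) = w(k_s)$, so the constraint $wT(1) = b$ forces $k_s = K$. Moreover, the running value at position $1$ along the Bruhat chain is $a, w(k_1),\ldots,w(k_{s-1}),b$, so by Proposition \ref{admpairs}(1) the $i$-th step is a positive folding precisely when this sequence increases at step $i$. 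The standard length-change formula for right multiplication by a transposition then yields, at each step, a closed-form exponent of $t$ of the shape $\pm N_{\cdot\cdot}(w[2,k_i-1]) + \mathrm{const}$.

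The base case $p = K - 2$ admits only $T = \{(1,K)\}$. A direct computation splits into the subcases $a < b$ (one negative folding, length change $1 + 2N_{ab}(w[2,K-1])$) and $a > b$ (one positive folding, length change $-1 - 2N_{ba}(w[2,K-1])$); these reproduce the two branches of \eqref{sum2cols0} with $p = K-2$.

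For the inductive step, I would partition the admissible $T$'s according to whether $(1,p+2) \in T$. Those with $(1,p+2) \notin T$ are exactly the admissible subsequences for $\Gamma^r(1,p+1)$ with the same $w$, contributing, by the inductive hypothesis, the right-hand side at $p+1$. Those with $(1,p+2) \in T$ factor uniquely as $\{(1,p+2)\} \cup T_0$ with $T_0 \subseteq \Gamma^r(1,p+1)$, and the weight of $(w,T)$ splits as the first-step weight (computed in closed form according to whether $a \lessgtr c$, where $c := w(p+2)$) times the weight of $(w',T_0)$ with $w' := w\cdot(1,p+2)$. Since $c \neq b$, one has $w'^{-1}(b) = K > p+2$, so the inductive hypothesis applies to $(w',T_0)$ with parameters $(a',b',p') = (c,b,p+1)$.

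The main obstacle will be the bookkeeping when combining the two contributions, since the closed form depends on the relative order of $a$, $b$, and $c$. Since $w'[2,p+2]$ is $w[2,p+1]$ with $a$ appended, the relevant counts satisfy $N_{cb}(w'[2,p+2]) = N_{cb}(w[2,p+1]) + [c<a<b]$ when $c<b$, and analogously in the other orders; these indicator corrections exactly compensate for the shift in exponent of $t$ produced by the factorization. A case analysis on the six possible orderings of $\{a,b,c\}$ then reduces the sum of case (i) and case (ii) to the elementary identity
\[
t + \frac{1-t}{1 - qt^{p+1}} \;=\; \frac{1 - qt^{p+2}}{1 - qt^{p+1}} \;=\; 1 + \frac{qt^{p+1}(1-t)}{1 - qt^{p+1}},
\]
in which the factor $1 - qt^{p+2}$ cancels against the denominator inherited from the inductive hypothesis, producing exactly \eqref{sum2cols0} at $p$.
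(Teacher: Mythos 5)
Your proposal follows essentially the same route as the paper's proof: decreasing induction on $p$ with base case $p=w^{-1}(b)-2$, splitting the sum according to whether $(1,p+2)\in T$, applying the inductive hypothesis at $p+1$ to $(w,\cdot)$ for the first part and to $(w(1,p+2),\cdot)$ for the second, and a case analysis on the relative order of $a$, $b$, $c:=w(p+2)$; your combining identity $t+\frac{1-t}{1-qt^{p+1}}=\frac{1-qt^{p+2}}{1-qt^{p+1}}$ and the indicator corrections from appending $a$ to $w[2,p+1]$ are exactly how the paper's cases close up. The one concrete issue is a sign-convention inconsistency in your assignment of positive versus negative foldings, and your setup paragraph and your base case contradict each other on it. In the convention forced by the statement (note that the exponent here is $\ell(wT)-\ell(w)$, the reverse of the exponent $\ell(w)-\ell(w\phi(J))$ in Theorem \ref{hlpthm}, because the chain has been reversed), a step is a positive folding precisely when the entry in position $1$ increases --- which is what your setup paragraph says, but it is the \emph{opposite} of a literal application of Proposition \ref{admpairs}(1), so that citation does not support the claim; the interchange of $J^{+}$ and $J^{-}$ under reversal of the chain has to be acknowledged (the paper glosses over this too). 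Your base case then uses the literal, and hence wrong, assignment: for $a<b$ the unique folding must be \emph{positive}, contributing $\frac{1}{1-qt^{p+1}}$ as the first branch of (\ref{sum2cols0}) requires, whereas labelling it negative as you do would introduce a spurious factor of $qt^{p+1}$ and the base case would not match. Once the labels are straightened out consistently, the rest of your argument coincides with the paper's computation.
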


\begin{proof}
We use decreasing induction on $p$. The base case for $p=w^{-1}(b)-2$ is based on the fact that
\begin{equation}\label{basec}\frac{1}{2}(\ell(w(1,p+2))-\ell(w)-1)=\casetwo{N_{ab}(w[2,p+1])}{a<b}{-1-N_{ab}(w[2,p+1])}\end{equation}
Let us now prove the statement for $p$ assuming it for $p+1$. Let $c:=w(p+2)$. The sum in (\ref{sum2cols0}), denoted by $S(w,p)$, splits into two sums, depending on $(1,p+2)\not\in T$ and $(1,p+2)\in T$. Let us assume first that $a<b$. For simplicity, we write $N_{rs}$ for $N_{rs}(w[2,p+1])$. 

{\em Case {\rm 1}.} $a<c<b$. By induction, the first sum is
\[S(w,p+1)=\frac{t^{N_{ab}+1}(1-t)}{1-qt^{p+2}}\,.\]
By (\ref{basec}), the second sum is
\begin{align*}\frac{t^{N_{ac}}(1-t)}{1-qt^{p+1}}\,S(w(1,p+2),p+1)&=\frac{t^{N_{ac}}(1-t)}{1-qt^{p+1}}\,\frac{t^{N_{cb}}(1-t)}{1-qt^{p+2}}\,.\end{align*}
where the first equality is obtained by induction. The desired result easily follows by adding the two sums into which $S(w,p)$ splits, using $N_{ac}+N_{cb}=N_{ab}$. 

{\em Case {\rm 2}.} $a<b<c$. By induction, the first sum is
\[S(w,p+1)=\frac{t^{N_{ab}}(1-t)}{1-qt^{p+2}}\,.\]
By (\ref{basec}), the second sum is
\begin{align*}\frac{t^{N_{ac}}(1-t)}{1-qt^{p+1}}\,S(w(1,p+2),p+1)&=\frac{t^{N_{ac}}(1-t)}{1-qt^{p+1}}\,\frac{qt^{p+1-N_{bc}}(1-t)}{1-qt^{p+2}}\,.\end{align*}
where the first equality is obtained by induction. The desired result easily follows by adding the two sums into which $S(w,p)$ splits, using $N_{ac}-N_{bc}=N_{ab}$. 

{\em Case {\rm 3}.} $c<a<b$. By induction, the first sum is the same as in Case 2. By (\ref{basec}), the second sum is
\begin{align*}\frac{t^{-1-N_{ca}}(1-t)qt^{p+1}}{1-qt^{p+1}}\,S(w(1,p+2),p+1)&=\frac{t^{-1-N_{ca}}(1-t)qt^{p+1}}{1-qt^{p+1}}\,\frac{t^{N_{cb}+1}(1-t)}{1-qt^{p+2}}\,.\end{align*}
where the first equality is obtained by induction. The desired result easily follows by adding the two sums into which $S(w,p)$ splits, using $N_{cb}-N_{ca}=N_{ab}$. 

We also have three cases corresponding to $a>b$, which are verified in a completely similar way.
\end{proof}

\begin{proposition}\label{twocol} Consider two sequences $C_1,C_2$ of size $p$ and entries in $[n]$, as well as a permutation $w$ in $S_n$ such that $w[1,p]=C_1$. Let $C_2C_1$ denote the two-column filling with left column $C_2$ and right column $C_1$. Assume that $C_2C_1$ is nonattacking.  Then we have
\begin{align}\label{sum2cols}
&\stacksum{T\::\:(w,T)\in{\mathcal F}(\Gamma^r(p))}{wT[1,p]=C_2} t^{\frac{1}{2}(\ell(wT)-\ell(w)-|T|} (1-t)^{|T|}\left(\prod_{(i,k)\in T^+}\frac{1}{1-q_it^{k-i}}\right)\left(\prod_{(i,k)\in T^-}\frac{q_it^{k-i}}{1-q_it^{k-i}}\right) \\
&=t^{\binom{|C_1|}{2}-\inv(C_2C_1)+\inv(C_2)}\left(\prod_{(i,1)\in\Des(C_2C_1)}q_i\right)\left(\prod_{(i,1)\in\Diff(C_2C_1)}\frac{1-t}{1-q_it^{\leg(i,1)+1}}\right)\,.\nonumber
\end{align}
The same result holds if $C_1$ has size $p+1$ instead, and $\Gamma^r(p)$ is replaced by $\rev(\Gamma'(p))$.
\end{proposition}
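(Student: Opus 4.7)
The plan is to iterate Proposition \ref{p2cols0} across the $p$ row-segments of $\Gamma^r(p)$, exploiting that each row-segment modifies a single position; the sum will factorize over rows, and reconciling the product with the closed form will reduce to a combinatorial identity for the $t$-exponent.

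First I decompose $\Gamma^r(p) = S_1 S_2 \cdots S_p$ where $S_i := ((i, p+1), \ldots, (i, n))$. Every transposition in $S_i$ fixes all positions outside $\{i\} \cup \{p+1, \ldots, n\}$, so positions $i+1, \ldots, p$ retain their initial values $C_1(i+1), \ldots, C_1(p)$ throughout the chain. Setting $T_i := T \cap S_i$ and $\pi^{(i)} := w \phi(T_1) \cdots \phi(T_i)$, one checks that $\phi(T_j)$ fixes position $i$ for $j > i$, so the global constraint $wT[1, p] = C_2$ decomposes into the row-by-row constraints $\pi^{(i)}(i) = C_2(i)$.

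Next I apply Proposition \ref{p2cols0} (or the variant obtained from the same proof by replacing position $1$ with $i$ and adjusting the truncation parameter; the argument is formally identical) to each row sum. The nonattacking hypothesis on $C_2 C_1$ ensures $C_2(i) \notin \pi^{(i-1)}[1, p]$, so the proposition is applicable with $a = C_1(i)$ and $b = C_2(i)$. Crucially, its output depends only on the interior entries $\pi^{(i-1)}[i+1, p] = C_1[i+1, p]$, which are independent of $T_1, \ldots, T_{i-1}$; hence the full sum factorizes as a product over rows. The diagonal case $C_1(i) = C_2(i)$ is not covered by Proposition \ref{p2cols0} but is easy: $\phi(T_i)$ is a cycle involving $i$ together with some positions in $\{p+1, \ldots, n\}$, so it fixes position $i$ only when $T_i = \emptyset$, which contributes $1$.

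Finally I verify that the product matches the right-hand side. The factors $\prod_{(i,1) \in \Diff} (1-t)/(1 - q_i t^{\leg(i,1)+1})$ and $\prod_{(i,1) \in \Des} q_i$ appear directly from the row contributions; the remaining task is the identity
\[
E := \sum_{i=1}^p E_i = \binom{|C_1|}{2} - \inv(C_2 C_1) + \inv(C_2),
\]
where $E_i$ equals $N_{C_1(i), C_2(i)}(C_1[i+1, p])$ when $C_1(i) < C_2(i)$, equals $\leg(i, 1) - N_{C_2(i), C_1(i)}(C_1[i+1, p])$ when $C_1(i) > C_2(i)$, and equals $0$ when $C_1(i) = C_2(i)$. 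I expect this identity to be the main obstacle: after expanding $\inv(C_2 C_1) - \inv(C_2) = |{\rm Inv}(C_1)| + \#\{\text{cross inversions}\} - \sum_{(i,1) \in \Des} \leg(i, 1)$ and grouping by row $i$, the case analysis requires careful bookkeeping of attacking pairs, in particular of the reading-order direction for cross-column pairs (where cells in the right column precede those in the left). The nonattacking hypothesis rules out the boundary coincidence $C_1(k) = C_2(i)$ for $k > i$, which is what lets the two count-differences collapse into the $N$-statistics. The alternative case ($|C_1| = p+1$, chain $\rev(\Gamma'(p))$) follows the same scheme with $S_i$ one transposition shorter, interior positions $\{i+1, \ldots, p+1\}$ (with $C_1(p+1)$ fixed throughout), $\leg(i, 1) = p+1-i$, and $\binom{p+1}{2}$ in place of $\binom{p}{2}$.
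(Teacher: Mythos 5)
Your proposal is correct and follows essentially the same route as the paper's proof: split $\Gamma^r(p)$ into its $p$ row segments, apply the (relabeled) Proposition \ref{p2cols0} to each, note that each row's contribution depends only on $C_1(i)$, $C_2(i)$ and $C_1[i+1,p]$ so that the sum factorizes, handle the diagonal case $C_1(i)=C_2(i)$ separately, and reduce everything to a counting identity for the $t$-exponent. The one step you leave as "the main obstacle" --- the identity $\sum_i E_i=\binom{|C_1|}{2}-\inv(C_2C_1)+\inv(C_2)$ --- is set up correctly and is exactly what the paper dispatches in one line by inclusion-exclusion on the pairs $i<k$ classified by the two conditions $C_1(i)>C_1(k)$ and $C_1(k)>C_2(i)$ (its $N_{d\wedge d}$, $N_{d\vee d}$, $N_{a\wedge a}$ bookkeeping), with the nonattacking hypothesis excluding ties just as you indicate.
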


\begin{proof}
We introduce some notation first. Consider the following two conditions (related to a descent) for a pair $(i,k)$ with $1\le i<k\le p$:
\begin{itemize}
\item $C_1(i)>C_1(k)$;
\item $C_1(k)>C_2(i)$.
\end{itemize}
Let $N_{d\wedge 1}$, $N_{1\wedge d}$, $N_{d\wedge d}$, $N_{d\vee d}$ and $N_{a\wedge a}$ denote the number of pairs $(i,k)$ satisfying the first condition, the second one, both conditions, at least one condition, and none of the conditions (meaning that both descents are replaced by ascents), respectively. Let us also define 
\[L:=\sum_{u\in\Des(C_2C_1)}\leg(u)\,.\]

Let us split $\Gamma^r(p)$ as $\Gamma_1^r(p)\ldots\Gamma_p^r(p)$ by the rows in (\ref{omegakchain}), that is,
\[\Gamma_i^r(p)=(\,(i,p+1),\:(i,p+2),\:\ldots ,\:(i,n)\,)\,.\]
This splitting induces one for the subsequence $T$ of $\Gamma^r(p)$ indexing the sum in (\ref{sum2cols}), namely $T=T_1\ldots T_p$. For $i=0,1,\ldots,p$, define $w_i:=wT_1 \ldots T_{i}$, so $w_0=w$. The sum in (\ref{sum2cols}) can be written as a $p$-fold sum in the following way:
\begin{equation}\label{kfold}\stacksum{T_p\::\:(w_{p-1},T_p)\in\F(\Gamma_p^r(p))}{w_p(p)=C_2(p)} E(w_{p-1},T_p)\;\ldots  \stacksum{T_1\::\:(w_0,T_1)\in\F(\Gamma_1^r(p))}{w_{1}(1)=C_2(1)} E(w_0,T_1)\,,\end{equation}
where 
\begin{equation*}E(w,T):=t^{\frac{1}{2}(\ell(wT)-\ell(w)-|T|} (1-t)^{|T|}\left(\prod_{(i,k)\in T^+}\frac{1}{1-q_it^{k-i}}\right)\left(\prod_{(i,k)\in T^-}\frac{q_it^{k-i}}{1-q_it^{k-i}}\right)\,.\end{equation*}
By Proposition \ref{p2cols0}, we have
\[\stacksum{T_i\::\:(w_{i-1},T_i)\in\F(\Gamma_i^r(p))}{w_{i}(i)=C_2(i)}\!\!\!\!\!\!\!\!\!\!\!\!\!\!\!\!\!E(w_{i-1},T_i)= \casethree{{t^{N_{C_1(i),C_2(i)}(C_1[i+1,p])}(1-t)}/{(1-q_it^{p-i+1})}}{C_1(i)<C_2(i)}{{q_it^{p-i-N_{C_2(i),C_1(i)}(C_1[i+1,p])}(1-t)}/{(1-q_it^{p-i+1})}}{C_1(i)>C_2(i)}{1}{C_1(i)=C_2(i)}\]
We can see that the above sum does not depend on the permutation $w_{i}$, but only on $C_1(i)$ and $C_2[i,p]$. Therefore, using the notation above, the $p$-fold sum (\ref{kfold}) evaluates to 
\[t^{N_{a\wedge a}+L-N_{d\wedge d}}\left(\prod_{(i,1)\in\Des(C_2C_1)}q_i\right)\left(\prod_{(i,1)\in\Diff(C_2C_1)}\frac{1-t}{1-q_it^{\leg(i,1)+1}}\right)\,.\]
Now observe that 
\[\binom{|C_1|}{2}-\inv(C_2C_1)-L+\inv(C_2)=\binom{|C_1|}{2}-N_{d\wedge 1}-N_{1\wedge  d}=\binom{|C_1|}{2}-N_{d\vee d}-N_{d\wedge d}=N_{a\wedge a}-N_{d\wedge d}\,.\]
This concludes the proof.

The case when $C_1$ has size $p+1$ is reduced to the previous one by extending $C_2$ to size $p+1$ via setting $C_2(p+1):=C_1(p+1)$.
\end{proof}

\begin{proof}[Proof of Theorem {\rm \ref{mainthm}}]
The splitting $\rev(\Gamma)=\Gamma_2^r\ldots \Gamma_{\lambda_1}^r$, where $\Gamma$ is our fixed $\lambda$-chain and $\Gamma_j^r:=\rev(\Gamma_j)$, induces a splitting $T=T_2\ldots T_{\lambda_1}$ of any $T$ for which $(w,T)\in\F(\rev(\Gamma))$, cf. Section \ref{specschwer}.  Let $m:=\lambda_1$ be the number of columns of $\lambda$, and let $C=C_1,\ldots,C_m$ be the columns of a fixed filling $\sigma$, of lengths $c_1:=\lambda_1',\ldots,c_m:=\lambda_m'$. For $j=1,\ldots,m$, define $w_j:=wT_2\ldots T_{j}$, so $w_1=w$. The sum in Theorem {\rm \ref{mainthm}} can be written as an $(m-1)$-fold sum in the following way:
\begin{equation}\label{mfold}\stacksum{T_{m}\::\:(w_{m-1},T_{m})\in\F(\Gamma_{m}^r)}{w_{m}[1,c_m]=C_m} E(m,w_{m-1},T_m) \;\ldots  \stacksum{T_2\::\:(w_1,T_2)\in\F(\Gamma_2^r)}{w_{2}[1,c_2]=C_2} E(2,w_1,T_2)\,;\end{equation}
here 
\begin{equation*}E(j,w,T):=t^{\frac{1}{2}(\ell(wT)-\ell(w)-|T|} (1-t)^{|T|}\left(\prod_{(i,k)\in T^+}\frac{1}{1-q^{\arm(i,j-1)}t^{k-i}}\right)\left(\prod_{(i,k)\in T^-}\frac{q^{\arm(i,j-1)}t^{k-i}}{1-q^{\arm(i,j-1)}t^{k-i}}\right)\,,\end{equation*} 
and $\arm(i,j-1)$ is computed in $\lambda$. 
By Proposition \ref{twocol}, we have
\begin{align*}&\stacksum{T_{j}\::\:(w_{j-1},T_{j})\in\F(\Gamma_{j}^r)}{w_{j}[1,c_j]=C_j} E(j,w_{j-1},T_j)=t^{\binom{c_{j-1}}{2}-\inv(C_jC_{j-1})+\inv(C_j)}\left(\prod_{(i,1)\in\Des(C_jC_{j-1})}q^{\arm(i,j-1)}\right)\times\\&\;\;\;\;\;\;\;\;\;\;\;\;\;\;\;\;\;\;\;\;\;\;\;\;\;\;\;\;\;\;\;\;\;\;\;\;\;\;\;\;\;\;\;\;\;\;\;\;\;\;\;\;\;\;\times\left(\prod_{(i,1)\in\Diff(C_jC_{j-1})}\frac{1-t}{1-q^{\arm(i,j-1)}t^{\leg(i,j-1)+1}}\right);\end{align*}
here both $\arm(i,j-1)$ and $\leg(i,j-1)$ are computed in $\lambda$. 
We can see that the above sum does not depend on the permutation $w_{j-1}$, but only on $C_{j-1}$ and $C_j$. Therefore, the $(m-1)$-fold sum (\ref{mfold}) is a product of $m-1$ factors, and evaluates to 
\[t^{n(\lambda) - \inv (\sigma)}q^{\maj(\sigma)}\left(\prod_{u\in\Diff(\sigma)}\frac{1-t}{1-q^{\arm(u)}t^{\leg(u)+1}}\right) \,.\]
Indeed, since $c_m=1$, we have
\[\sum_{j=2}^m \binom{c_{j-1}}{2}-\sum_{j=2}^m \left(\inv(C_jC_{j-1})-\inv(C_j)\right)=n(\lambda)-\inv(\sigma)\,.\]
\end{proof}


\end{document}